\pdfoutput=1

\documentclass[a4paper]{article}
\usepackage[T1]{fontenc}
\usepackage{array}
\usepackage{multirow}
\usepackage{amsmath}
\usepackage{amssymb}
\usepackage{stmaryrd}
\usepackage{graphicx}
\usepackage{wasysym}
\usepackage{diagbox}
\usepackage{lipsum}
\usepackage{amsthm}
\usepackage{enumerate}
\usepackage[shortlabels]{enumitem}
\usepackage[colorlinks,
linkcolor=blue,
anchorcolor=blue,
citecolor=blue]{hyperref}
\usepackage[symbol]{footmisc}

\theoremstyle{definition}
\newtheorem{thm}{Theorem}
\newtheorem{defn}{Definition}

\newtheorem{cor}{Corollary}[thm]
\newtheorem{prop}{Proposition}
\newtheorem{rem}{Remark}

%\makeatletter

%%%%%%%%%%%%%%%%%%%%%%%%%%%%%% LyX specific LaTeX commands.
%% Because html converters don't know tabularnewline
%\providecommand{\tabularnewline}{\\}

%%%%%%%%%%%%%%%%%%%%%%%%%%%%%% Textclass specific LaTeX commands.

%\makeatother

\usepackage[english]{babel}

\begin{document}

\title{On Rigid Origami \uppercase\expandafter{\romannumeral1}: Piecewise-planar Paper with Straight-line Creases}

\author{Zeyuan He, Simon D. Guest\footnote{zh299@cam.ac.uk, sdg@eng.cam.ac.uk. Department of Engineering, University of Cambridge, Cambridge CB2 1PZ, United Kingdom.}}

\maketitle

%%%% Abstract text to be placed here %%%%%%%%%%%%
\begin{abstract}
We develop a theoretical framework for rigid origami, and show how this framework can be used to connect rigid origami and results from cognate areas, such as the rigidity theory, graph theory, linkage folding and computer science. First, we give definitions on important concepts in rigid origami, then focus on how to describe the configuration space of a creased paper. The shape and 0-connectedness of the configuration space are analysed using algebraic, geometric and numeric methods, where the key results from each method are gathered and reviewed.
\end{abstract}

\begin{small}
	{$\; \; \,$ \textbf{Keywords}:} rigid-foldability, folding, configuration
\end{small}
%%%%%%%%%%%%%%%%%%%%%%%%%%%

%%%%%%%%%% Insert the texts which can accomdate on firstpage in the tag "fmtext" %%%%%
	
\section{Introduction}
%%%% Insert A head here
This article develops a general theoretical framework for rigid origami, and uses this to gather and review the progress that researchers have made on the theory of rigid origami, including other related areas, such as rigidity theory, graph theory, linkage folding, and computer science. 

Origami has been used for many different physical models, as a recent review \cite{callens_flat_2017} shows. Sometimes a "rigid" origami model is required where all the deformation is concentrated on the creases. A rigid origami model is usually considered to be a system of rigid panels that are able to rotate around their common boundaries and has been applied to many areas across different length scales \cite{debnath_origami_2013}. These successful applications have inspired us to focus on the fundamental theory of rigid origami. Ultimately, we are considering two problems: first, the positive problem, which is to find useful sufficient and necessary conditions for a creased paper to be rigid-foldable; second, the inverse problem, which is to approximate a target surface by rigid origami.

The paper is organized as follows. In Sections 2 and 3 we will clarify the definitions of relevant concepts, such as what we mean by paper and rigid-foldability. In Sections 4, 5 and 6 we will show three methods that can

%Therefore in this article we intend to build up a theoretical framework for rigid origami, that allows ideas and conclusions to be clearly presented.

%%%%%%%%%%%%%%% End of first page %%%%%%%%%%%%%%%%%%%%%

\noindent be applied to study rigid origami. Specifically, an algebraic method (linked to rigidity theory and graph theory), a geometric method (linked to linkage folding), and a numerical method (linked to computer science). Some comments and a brief discussion on some important downstream open problems on rigid origami conclude the paper.

\section{Modelling}

%modify this part, add starting sentences and motivation

In this section we start with some basic definitions for \textit{origami}. Although the idea of \textit{folding} can be precisely described by isometry excluding Euclidean motion, the definition of paper needs to be carefully considered: we want our mathematical definition to correspond with the commonly understood properties of a paper in the physical world. A paper should not just be a surface in $\mathbb{R}^3$. At any point, there might be contact of different parts of a paper, although crossing is not allowed. We introduce the idea of a \textit{generalized surface} in Definition \ref{defn: generalized} that allows multiple layers local to a point, and in Definition \ref{defn: basic} exclude all the crossing cases with the help of an \textit{order function} in Definition \ref{defn: order function}. The definitions we make in this section are based on Sections 11.4 and 11.5 in \cite{demaine_geometric_2007} with appropriate modifications and extensions -- for example, we don't require a paper to be orientable, and we allow contact of different parts of a paper, not its folded state.

\begin{defn} \label{defn: generalized}
	We first consider a \textit{connected piecewise-$C^1$ 2-manifold $M$} (defined in Sections 12.3 and 15.2, \cite{zorich_mathematical_2004}). Here at most countable piecewise-$C^1$ curves can be removed from $M$ in such a way that the remainder decomposes into at most countable $C^1$ open 2-manifolds, and we require $M$ to be a closed set. Every point on each piece has a well-defined tangent space, and an Euclidean metric is equipped such that we can measure the length of a piecewise-$C^1$ curve connecting two points on $M$.
	
	A \textit{generalized surface} $g(M)$ is a subset of $\mathbb{R}^3$, where $g: M \rightarrow \mathbb{R}^3$ is a piecewise immersion. A \textit{piecewise immersion} is a continuous and piecewise-$C^1$ function whose derivative is injective on each piece of $M$. Hence $g(M)$ is still connected and a closed set. The \textit{distance} of two points $g(p)$ and $g(q)$ on $g(M)$ ($p,q \in M$) is defined as the infimum of the lengths of $g(\gamma)$, where $\gamma$ is a piecewise-$C^1$ curve connecting $p$ and $q$ on $M$.
\end{defn}

The definition on a generalized surface is an extension of how we usually define a connected piecewise-$C^1$ surface in $\mathbb{R}^3$ (Section 12.2, \cite{zorich_mathematical_2004}). As stated above, Definition \ref{defn: generalized} is still not enough to prevent crossing of different parts of a paper, for which we introduce the definition of \textit{crease pattern} and \textit{order function}.

\begin{defn}
	A \textit{crease pattern} $G$ is a simple graph embedding on a generalized surface $g(M)$, that contains the boundary of those pieces. Each edge of $G$ is a $C^1$ curve on $g(M)$. Note that $\partial g(M) \subset G$. A \textit{crease} is a boundary component of a piece without the endpoints, and we call these endpoints \textit{vertices}. We define a vertex or crease as \textit{inner} if it is not on the boundary $\partial g(M)$, otherwise \textit{outer}; and a piece as \textit{inner} if none of its vertices is on $\partial g(M)$, otherwise \textit{outer}.
\end{defn}

To introduce the order of stacking on different parts of a paper, we need the information of normal vector on $g(M)$. Now we consider a point $p \in M$ and $g(p) \notin G$, there are two coordinate frames of the tangent space (left-handed and right-handed) of $g(p)$, which have opposite orientation. Therefore we can define the orientability and orientation of $g(M)$ in the same way as a piecewise-$C^1$ surface in $\mathbb{R}^3$, which is described in Section 12.3, \cite{zorich_mathematical_2004}. The following definition on order function depends on the orientability of $g(M)$.

\begin{defn} \label{defn: order function}
	If $p,q \in M$, $g(p),g(q) \notin G$ and $g(p)=g(q)$, an \textit{order function} $\lambda$ is defined on $p,q$ that describes the order of stacking of layers locally. If $g(M)$ is orientable, all the tangent spaces have a consistent orientation, then we define $\lambda(p, q)=1$ if $p$ is in the direction pointed to by the normal vector $\boldsymbol{n}_g(q)$ (on the ``top'' side of $q$); and $\lambda(p, q)=-1$ if $p$ is in the direction pointed to by $-\boldsymbol{n}_g(q)$ (on the ``bottom'' side of $q$). If $g(M)$ is not-orientable, since any surface in $\mathbb{R}^3$ is non-orientable if and only if it contains a M\"obius band as a subspace, and a M\"obius band is the disjoint union of two orientable surfaces (Section 12.3, \cite{zorich_mathematical_2004}), we can always divide $g(M)$ into at most countable orientable generalized surfaces. By assigning a unified orientation of these, we can still describe the order of stacking using an order function. The information of how $g(M)$ is divided into orientable pieces should be included in the description of the order function.
\end{defn}

\begin{defn} \label{defn: basic}
	A generalized surface $S$ is a \textit{paper} if we can find a crease pattern $G$, such that $S$ makes the order function $\lambda$ satisfy the four conditions described in Section 11.4 of \cite{demaine_geometric_2007}, which prevent the crossing of paper. 
\end{defn}

\begin{rem}
	We require a generalized surface to be a closed set since it is physically reasonable for a paper to contain its boundary. We require a generalized surface to be connected since otherwise it is the union of at most countable connected generalized surfaces (also called its components), and in that case each component is a paper. Definition \ref{defn: basic} prevents crossing of different parts of a paper when they contact with each other. The contact of a point with a crease point is allowed, but the conditions for order function can not be satisfied if there is a crossing. A paper does not need to be developable or bounded.
\end{rem}

Figure \ref{fig:general papers} shows some examples of objects which are, and are not, papers.

\begin{figure}[!]
	\noindent \begin{centering}
		\includegraphics[width=0.9\linewidth]{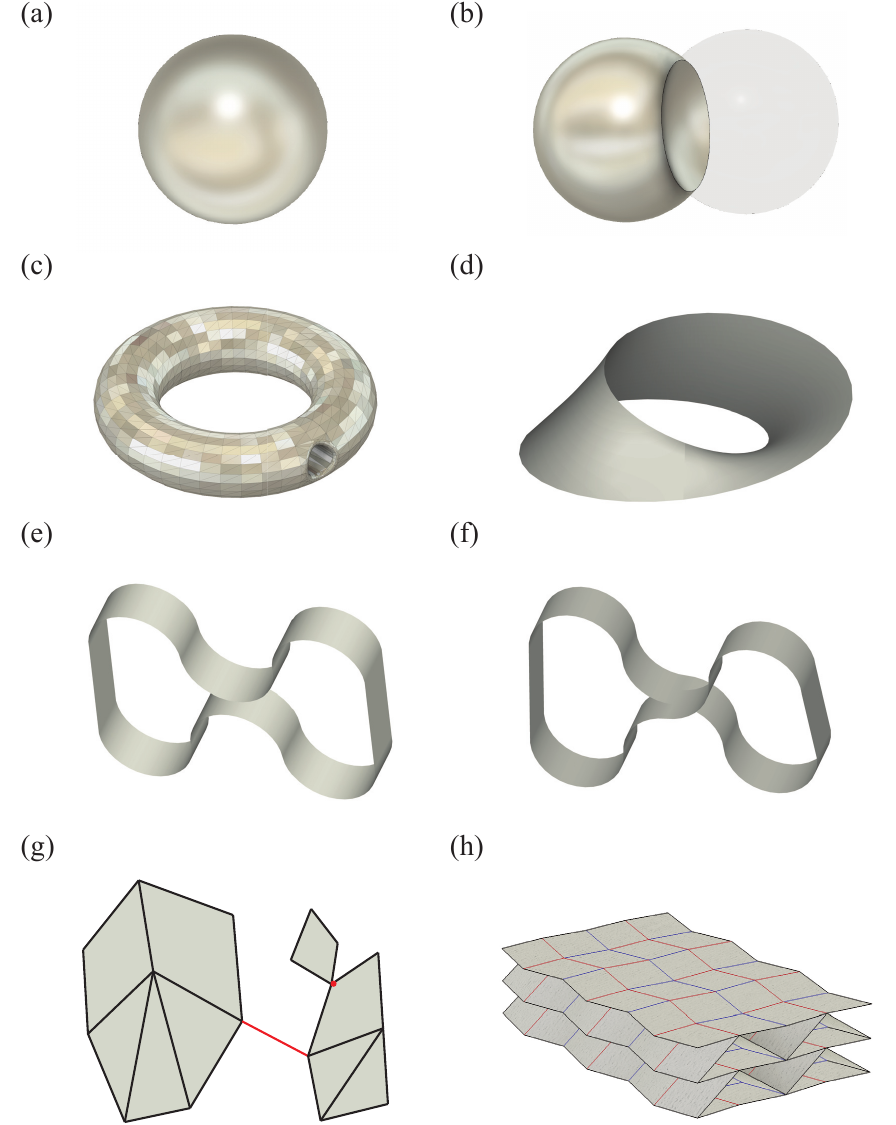}
		\par\end{centering}
	
	\caption{\label{fig:general papers}Examples of objects that do, or do not, conform to the definition of ``paper'' used in this article. (a)--(e) are papers with unusual shapes. (a) is a sphere, which can be regarded as a paper. (b) is a folded state of (a) with a curved crease, generated by the intersection of two identical spheres. (c) is a piecewise-planar paper with two dough-nut holes (Euler Characteristic -2). (d) is a M\"obius band, an example of non-orientable surface, which is the disjoint union of two orientable surfaces. (e) shows the case where there is contact between different parts of a paper. (f)--(h) are ``paper-like'' objects that are not papers even though they might be physically ``foldable''. (f) is similar to (e), but the two layers intersect with each other, hence it does not satisfy the condition on the order function. Part of (g) is 1-dimensional. It can be regarded as a ``creased paper'', whose crease pattern has a cut vertex and a bridge (coloured red), which could be considered as a spherical joint and a bar. (h) is an example of the stacking meta-material, where the contact points are crease points. It is not connected following the requirement of a paper, instead it can be regarded as the union of five papers (plotted by Freeform Origami \cite{tachi_freeform_2010-1}).}
\end{figure}

%modify the distance

\begin{defn} \label{defn: folded state}
	Here we define the \textit{folded state} and \textit{folding motion} for a creased paper. We say a \textit{creased paper} $(S,G)$ is an ordered pair of a paper $S$ and a crease pattern $G$. A \textit{folded state} of a creased paper $(S, G)$ can be expressed by a pair $(f, \lambda)$, where $f$ is an isometry function excluding Euclidean motion that maps $(S, G)$ to another creased paper $(f(S), f(G))$ and preserves the distance; $\lambda$ is the order function of $(f(S), f(G))$. A folded state is \textit{free} when the domain of the order function is empty. The identity map with its order function $(I, \lambda)$ is the \textit{trivial} folded state.
	
	A \textit{folding motion} is a family of continuous functions mapping each time $t \in [0,1]$ to a folded state $(f_t, \lambda_t)$. The continuity of $f_t$ is defined under the \textit{supremum metric}, $\forall t \in [0,1]$, $\forall \epsilon>0$, $\exists \delta>0$ s.t. if $|t'-t|<\delta$ 
	\begin{equation}
	\sup_{p \in S} ||f_{t'}(p)-f_t(p)||<\epsilon
	\end{equation}
	If $S$ is orientable, the continuity of $\lambda_t$ with respect to $t$ is described in Section 11.5 of \cite{demaine_geometric_2007}. If $S$ is not orientable, when we define its order function, $S$ is divided into orientable pieces and assigned a unified orientation. Then at each non-crease contact point, the continuity of $\lambda_t$ with respect to $t$ should follow Section 11.5 of \cite{demaine_geometric_2007}.
		
	If there is a folding motion between two different folded states $(f_1, \lambda_1)$ and $(f_2, \lambda_2)$, we say $(f_1, \lambda_1)$ is \textit{foldable} to $(f_2, \lambda_2)$, and the creased paper is \textit{foldable}. If $(f, \lambda)$ is not foldable to any other folded state, we say this folded state is \textit{rigid}.
\end{defn}

\begin{rem}
	Mapping a creased paper to another creased paper requires the isometry function $f$ to be a piecewise immersion.
\end{rem}

Based on the definitions above, now we start to discuss rigid origami. The only difference between origami and rigid origami is the restriction of $f$ on each piece. 

\begin{defn} \label{defn: rigid origami}
	A \textit{rigidly folded state} is a folded state where the restriction of the isometry function $f$ on each piece is a combination of translation and rotation. A \textit{rigid folding motion} is a folding motion where all the folded states are rigidly folded states. 
\end{defn}

\begin{rem}
	We do not include reflection in the isometry function of rigid origami restricted to a piece.
\end{rem} 

From Definition \ref{defn: rigid origami}, since each piece is under a combination of translation and rotation, we can make appropriate simplification on the creased paper $(S, G)$. First, we can require each inner crease to be a straight-line, otherwise the two pieces adjacent to this inner crease will not have relative rigid folding motion, which is not what we expect in rigid origami. Second, if all inner creases are straight-line, there is no essential difference between the rigid folding motion of a planar piece and a general piece, as well as between a straight-line outer crease and a general outer crease. Therefore,

\begin{defn} \label{def: rigid origami}
	In rigid origami, our object of study can be limited to a creased paper $(P,C)$ with a \textit{piecewise-planar} paper $P$ and a \textit{straight-line} crease pattern $C$. Here we call each planar piece a \textit{panel}. The set of all rigidly folded states $\{(f,\lambda)\}_{P,C}$ is called the \textit{rigidly folded state space}.
\end{defn}

As an alternative to Definition \ref{def: rigid origami}, \cite{demaine_non_2011} shows that an isometric map on a creased paper will become piecewise rigid if we require the paper $S$, crease pattern $G$ and isometry function $f$ to have stronger properties. This result is provided in Appendix \ref{app: alternative}. 

In the following section, we will start to discuss the configuration of a creased paper $(P,C)$ in rigid origami.

\section{The Configuration Map of a Creased Paper in Rigid Origami}

In order to study the rigid-foldability between possible rigidly folded states of a creased paper $(P,C)$ in rigid origami, in this section we introduce the configuration map to characterize a rigidly folded state. Before that, we need some preliminary definitions.

\begin{defn} \label{sector and folding angles}
	At every vertex, we define the angles between adjacent creases by \textit{sector angles}, each of which is named $\alpha_{i}$. $\boldsymbol{\alpha}=\{\alpha_{i}\}$ is the set of all sector angles, which we regard as fixed variables under a given creased paper $(P,C)$, satisfying:
	\begin{equation*}
	\alpha_{i} \in (0, 2\pi); ~\textrm{except~that,~for~a~degree-1~vertex}, ~\alpha = 2\pi
	\end{equation*} 
	Then we specify an orientation for $(P,C)$ (If $S$ is not orientable, an unified orientation is assigned in the same way to its division as mentioned in Definition \ref{defn: order function}). At each inner crease, we define a signed \textit{folding angle} $\rho_j$ by which the two panels adjacent to the inner crease deviate from a plane. All the folding angles are measured from the specified orientation. $\boldsymbol{\rho}=\{\rho_j\}$ is the set of all folding angles, satisfying:
	\begin{equation*}
	\rho_j \in [-\pi,\pi]
	\end{equation*}
	The sector and folding angles are also explained graphically in Figure \ref{fig: sector_angles}.
\end{defn}

\begin{figure}[!tb]
	\noindent \begin{centering}
		\includegraphics[width=0.9\linewidth]{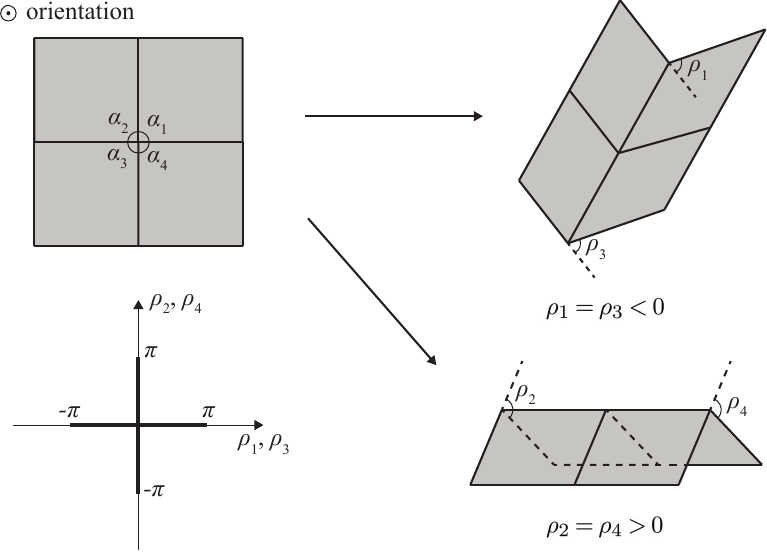}
		\par\end{centering}
	
	\caption{\label{fig: sector_angles}Here we show a simple creased paper with four sector angles $\alpha_1=\alpha_2=\alpha_3=\alpha_4=\pi/2$, and we select its two rigidly folded states with $\rho_1=\rho_3<0$, $\rho_2=\rho_4=0$ and $\rho_2=\rho_4>0$, $\rho_1=\rho_3=0$. The folding angles are measured in the specified orientation, which is the ``top'' side of the paper, facing the readers. The configuration space is a "cross", and the information of stacking is contained in the difference of $\rho_j = \pm \pi$. The configuration map is a bijection from the configuration space to the rigidly folded state space.}
\end{figure}

We introduce the sector and folding angles in order to find an explicit expression for the isometry function $f$ of a rigidly folded state for any point $p \in P$ and a given $\boldsymbol{\rho}$. The set of folding angles of the trivial rigidly folded state is denoted by $\boldsymbol{\rho}_0$, which is not necessarily $\boldsymbol{0}$.

From the analysis above, we know a rigidly folded state $(f, \lambda)$ corresponds with a set of folding angles $\boldsymbol{\rho}$. However, different $(f, \lambda)$ can be mapped to the same $\boldsymbol{\rho}$ --- an example is shown in Figure \ref{fig: order function}. The difference in $\rho_j = \pm \pi$ can only represent the information of order function on two panels adjacent to an inner crease. Therefore we still need the order function when expressing a rigidly folded state with $\boldsymbol{\rho}$.

\begin{figure}[!tb]
	\noindent \begin{centering}
		\includegraphics[width=1\linewidth]{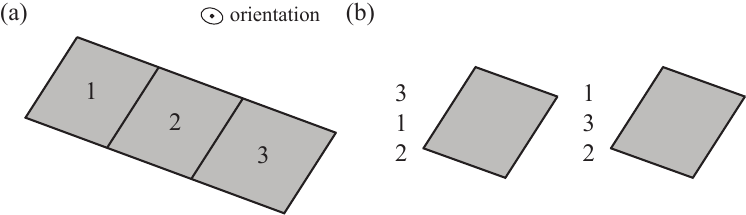}
		\par\end{centering}
	
	\caption{\label{fig: order function}(a) is a creased paper with three identical squares. Here we choose the orientation to be the ``top'' side of the paper, facing the readers. (b) shows two different rigidly folded states of (a) with the same folding angle $\{-\pi,-\pi\}$. The numbers are stacking sequences of the panels. The order functions of these two rigidly folded states are $\lambda(1,3)=1$, $\lambda(3,1)=-1$ and $\lambda(1,3)=-1$, $\lambda(3,1)=1$.}
\end{figure}

\begin{prop} \label{prop explicit def of f}
	Given a creased paper $(P,C)$, we fix a panel $P_0$ to exclude Euclidean motion. Set one of the vertices as the origin and one of its creases which we label $c_0$ as the $x$-axis. then build the right-hand \textit{global coordinate system} with $xy$-plane on this panel. For every $p \in P$, $p = [x,y,z]^T$, we can always draw a path from the origin $(0,0,0)$ to $p$. The path intersects with $C$ on some inner creases which we label $c_k$ ($k \in [1,K]$). The folding angle on crease $c_k$ is $\rho_k$.  
	
	We can also define a \textit{local coordinate system} on panel $P_k$ ($k \in [1,K]$), whose origin $O_k$ is on one endpoint of $c_k$, $x$-axis is on $c_k$ and $z$-axis is normal to the panel. The direction of all $z$-axes of the global and local coordinate systems are consistent with the orientation of the paper and hence consistent with the definition of the sign of folding angles. We specify the direction of the $x$-axis on $c_k$ so that the rotation from panel $P_{k-1}$ to $P_k$ is a rotation $\rho_k$ about that axis. We denote the angle between the $x$-axes of local coordinate systems on creases $c_{k-1}$ and $c_k$ as $\beta_k$. $\beta_k$ is a linear function of the sector angles $\boldsymbol{\alpha}$. $p$ is in the closure of $P_K$.
	
	Now we can write the coordinate of $p$ in the local coordinate system as $f^K(p) = [f_x^K(p),f_y^K(p),f_z^K(p)]^T$ (see Figure \ref{fig:configuration map}). When using homogeneous matrices to represent the transformation from local to global coordinate system along the path, the result is:	
	\begin{equation} 
	\left[ \begin{array}{c}
	\\
	f(p) \\
	\\
	1
	\end{array} \right]=
	\left[ \begin{array}{c}
	f_x(p) \\
	f_y(p) \\
	f_z(p) \\
	1
	\end{array} \right] = T_K(\boldsymbol{\rho})
	\left[ \begin{array}{c}
	f_x^K(p) \\
	f_y^K(p) \\
	f_z^K(p) \\
	1
	\end{array} \right]
	\end{equation}
	where,
	\begin{align} \label{eq: general 1}
	T_K(\boldsymbol{\rho})=\prod_{1}^{K}
	\left[ \begin{array}{cccc}
	\cos \beta_k & -\sin \beta_k & 0 & a_k \\
	\sin \beta_k & \cos \beta_k & 0 & b_k \\
	0 & 0 & 1 & 0   \\
	0 & 0 & 0 & 1
	\end{array} \right]
	\left[ \begin{array}{cccc}
	1 & 0 & 0 & 0 \\
	0 & \cos \rho_k & -\sin \rho_k & 0 \\
	0 & \sin \rho_k & \cos \rho_k & 0 \\
	0 & 0 & 0 & 1
	\end{array} \right]
	\end{align}
	$[a_k,b_k,0]^T$ ($k \in [1,K]$) is the position of $O_k$ in the local coordinate system for panel $P_{k-1}$. The product $T$ is formed by post-multiplication.
	
	Let $\boldsymbol{\rho}=\boldsymbol{\rho}_0$:	
	\begin{equation}
	\left[ \begin{array}{c}
	x \\
	y \\
	z \\
	1
	\end{array} \right] = T_K(\boldsymbol{\rho}_0)
	\left[ \begin{array}{c}
	x^K \\
	y^K \\
	z^K \\
	1
	\end{array} \right]
	\end{equation}
	where $[x^K, y^K, z^K]$ is the coordinate of $p$ in the local coordinate system on panel $K$. As panel $P_K$ moves rigidly, we have
	\begin{equation}
	\left[ \begin{array}{c}
	f_x^K(p) \\
	f_y^K(p) \\
	f_z^K(p) \\
	1
	\end{array} \right]=
	\left[ \begin{array}{c}
	x^K \\
	y^K \\
	z^K \\
	1
	\end{array} \right]
	\end{equation}
	Thus,
	\begin{equation} \label{eq: general 2} 
	\left[ \begin{array}{c}
	f_x(p) \\
	f_y(p) \\
	f_z(p) \\
	1
	\end{array} \right] = T_K(\boldsymbol{\rho})T_K^{-1}(\boldsymbol{\rho}_0)
	\left[ \begin{array}{c}
	x \\
	y \\
	z \\
	1
	\end{array} \right]
	\end{equation}
\end{prop}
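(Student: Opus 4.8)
The plan is to prove the formula by induction on the number $K$ of creases that the chosen path crosses, treating the statement as an accumulation of rigid motions along the path. The starting observation is that, by Definition~\ref{defn: rigid origami}, the restriction of $f$ to each panel $P_k$ is an orientation-preserving rigid motion (a composition of a translation and a rotation, with reflection excluded by the remark following Definition~\ref{defn: rigid origami}). Hence, once a global frame has been fixed on $P_0$, the position of any point is completely determined by recording how the local frame on each successive panel sits relative to the local frame on the previous panel.

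First I would analyse a single transition across a crease $c_k$. Because $f$ is a distance-preserving isometry (Definition~\ref{defn: folded state}) and the crease $c_k$ is shared by panels $P_{k-1}$ and $P_k$, the two rigid motions must agree along the whole crease line. A rigid motion of $\mathbb{R}^3$ that fixes a line pointwise is a rotation about that line, so the relative motion from $P_{k-1}$ to $P_k$ is a rotation about $c_k$; by the sign convention fixed in Definition~\ref{sector and folding angles} its angle is exactly the folding angle $\rho_k$, and in the local frame on $c_k$ this is the second matrix of Eq.~\eqref{eq: general 1}. The remaining ingredient is purely planar: within the flat panel $P_{k-1}$ the local frame on $c_{k-1}$ and the local frame on $c_k$ differ by an in-plane rigid motion, namely a rotation by $\beta_k$ about the common normal ($z$-axis) together with the translation $[a_k,b_k,0]^T$ carrying $O_{k-1}$ to $O_k$. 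This gives the first matrix of Eq.~\eqref{eq: general 1}, and since $\beta_k$ is the exterior turning angle between the two creases inside a flat panel, it is a linear combination of sector angles, as claimed.

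Next I would assemble the induction. Writing $M_k$ for the product of the two $4\times 4$ matrices associated with $c_k$, the base case $K=0$ is immediate because $f|_{P_0}$ is the identity in the global frame. For the inductive step I would compose the local-to-global map for $P_{k-1}$, namely $T_{k-1}=M_1\cdots M_{k-1}$, with the transition matrix $M_k$; since homogeneous coordinates turn composition of rigid motions into matrix multiplication, and each $M_k$ expresses the frame of $P_k$ relative to the frame of $P_{k-1}$, the maps accumulate by post-multiplication, yielding $T_K(\boldsymbol{\rho})=\prod_{1}^{K}M_k$. Finally I would eliminate the dependence on the local coordinates of $p$: specialising to $\boldsymbol{\rho}=\boldsymbol{\rho}_0$ gives the reference coordinates $[x,y,z]^T=T_K(\boldsymbol{\rho}_0)[x^K,y^K,z^K]^T$, while the rigid motion of $P_K$ leaves its intrinsic coordinates unchanged, so $[f_x^K,f_y^K,f_z^K]^T=[x^K,y^K,z^K]^T$. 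Substituting $[x^K,y^K,z^K]^T=T_K^{-1}(\boldsymbol{\rho}_0)[x,y,z]^T$ into $f(p)=T_K(\boldsymbol{\rho})[x^K,y^K,z^K,1]^T$ produces Eq.~\eqref{eq: general 2}.

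I expect the main obstacle to be the bookkeeping of orientation and sign conventions rather than any deep difficulty: one must verify that the chosen directions of the local $x$- and $z$-axes make the crease rotation appear as a rotation by $+\rho_k$ with the sign dictated by Definition~\ref{sector and folding angles}, and that the in-plane turning angle is genuinely $\beta_k$ and not its supplement. A secondary point worth flagging is well-definedness: the construction fixes one path from the origin to $p$, so strictly one should check that the resulting $f(p)$ is independent of the path, i.e.\ that the product of transition matrices around any closed loop (in particular around each interior vertex) is the identity. This consistency is precisely what makes $(f,\lambda)$ a genuine folded state, and for the present proposition it may be absorbed into the standing hypothesis that $f$ is an isometry.
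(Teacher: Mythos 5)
Your proposal is correct and follows essentially the same route as the paper, which presents this proposition constructively: local frames on successive panels, each crease transition factored into an in-plane frame change (by $\beta_k$ with translation $[a_k,b_k,0]^T$) followed by a rotation by $\rho_k$ about the crease, composed by post-multiplication of homogeneous matrices, and then the elimination of the local coordinates by specialising to $\boldsymbol{\rho}_0$. Your closing remark on path-independence is a sound observation; the paper handles it not here but via the consistency constraints of Definition \ref{def constraints}, where the loop products around inner vertices and holes are required to be the identity.
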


\begin{figure}[!tb]
	\noindent \begin{centering}
		\includegraphics[width=0.8\linewidth]{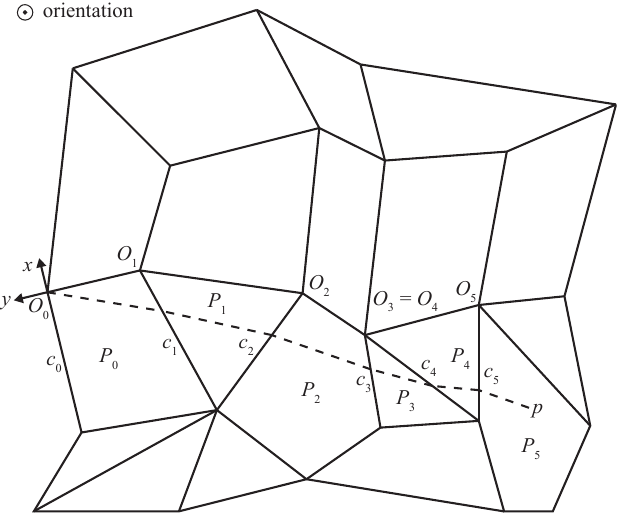}
		\par\end{centering}
	
	\caption{\label{fig:configuration map}A creased paper with one boundary component --- the outer face (Euler characteristic $1$). Here we choose the orientation to be the ``top'' side of the paper, facing the readers. We show the point $p$, intermediate inner creases $c_k$, panels $P_k$ and origins $O_k$ ($k \in [0,5]$).}
\end{figure}

\begin{rem}
	In Definition \ref{defn: order function}, the order function is defined on the non-crease contact points, and from the consistency condition of order function (Section 11.4, \cite{demaine_geometric_2007}), in rigid origami pairs of points on stacked panels have the same order function. Here we note that if just describing the order of a creased paper by the stacking order of panels, it may not be a well-defined order. An example is the classical square twist, where the ordering of some panels $a, b, c, d$ are $a>b>c>d>a$ ($>$ means on the top side of).
\end{rem} 

\begin{defn} \label{configuation bijection}
	 Given a creased paper $(P,C)$, we define the \textit{configuration map} $F: \{(\boldsymbol{\rho}, \lambda)\}_{P,C} \rightarrow \{(f, \lambda)\}_{P,C}$, where $\{\boldsymbol{\rho}\}_{P,C}$ is the set of folding angles of all possible rigidly folded states of $(P,C)$, and $\{\lambda\}_{P,C}$ is the collection of order function for each $\boldsymbol{\rho}$. The collection of this pair $\{(\boldsymbol{\rho}, \lambda)\}_{P,C}$ is called the \textit{configuration space}. An example of the configuration map and configuration space is provided in Figure \ref{fig: sector_angles}. Now $F$ naturally becomes a bijection. The order function $\lambda$ can be a multivalued function of $\boldsymbol{\rho}$ in the configuration space, and when we use $\boldsymbol{\rho}$ to describe the configuration, $\lambda$ does not need to include the stacking of adjacent panels since this information is included in the difference of $\rho_j = \pm \pi$.
\end{defn}	

Before further discussion we want to explain the limit of $\boldsymbol{\rho}$, $f$ and $\lambda$ in the configuration space. A series of $\{\boldsymbol{\rho}_n\} \rightarrow \boldsymbol{\rho}$ is naturally defined; $\{f_n\} \rightarrow f$ means the supreme metric $\sup |f_n(p)-f(p)| \rightarrow 0$; $\{\lambda_n\} \rightarrow \lambda$ requires $\{\lambda_n\}$ to satisfy the continuity condition mentioned in Section 11.5 of \cite{demaine_geometric_2007}. These conditions for $\{\lambda_n\}$ are to guarantee the ``approach'' of $\{\lambda_n\}$ is ``physically admissible''.

\begin{prop} \label{properties of F}
	The configuration map $F$ has the following properties:
	\begin{enumerate}[(1)]
		\item $F$ is \textit{scale-independent}. That means, if we inflate $P$ to $P'$ by a factor $c$ ($g>0$), for any $p'=cp$, $f(p')=cf(p)$; if an order function is defined on $p,q$, and $p'=cp$, $q'=cq$ then $\lambda(p',q')=\lambda(p,q)$.
		\item We extend $f$ to be the function of $\boldsymbol{\rho}$, $\boldsymbol{\rho}_0$ and $p$, then we can formally calculate
		\begin{equation*}
		f(-\boldsymbol{\rho}, -\boldsymbol{\rho}_0, \left[ \begin{array}{c}
		x \\
		y \\
		-z
		\end{array} \right])
		= \left[ \begin{array}{ccc}
		1 & 0 & 0 \\
		0 & 1 & 0 \\
		0 & 0 & -1
		\end{array} \right] f(\boldsymbol{\rho}, \boldsymbol{\rho}_0, \left[ \begin{array}{c}
		x \\
		y \\
		z 
		\end{array} \right])
		\end{equation*}
		Therefore the positions of $f(-\boldsymbol{\rho}, -\boldsymbol{\rho}_0)$ and $f(\boldsymbol{\rho}, \boldsymbol{\rho}_0)$ are symmetric to $P_0$, $\{\boldsymbol{\rho}\}_{P,C}$ is symmetric to $\boldsymbol{0}$, and $\lambda(-\boldsymbol{\rho})=-\lambda(\boldsymbol{\rho})$.
		\item If $F$ is defined on a neighbourhood of a point in $\{(\boldsymbol{\rho},\lambda)\}_{P,C}$, or $F^{-1}$ is defined on a neighbourhood of a point in $\{(f,\lambda)\}_{P,C}$, $F$ is a homeomorphism.
		\item $\{\boldsymbol{\rho}\}_{P,C}$ is discrete or compact.
	\end{enumerate}
\end{prop}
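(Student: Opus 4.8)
My plan is to read off properties (1) and (2) directly from the closed form \eqref{eq: general 2}, handle (3) by checking continuity in both directions of the already-established bijection, and reserve the real effort for (4). For (1), I would use that scaling the paper by $c>0$ leaves every sector angle $\boldsymbol{\alpha}$ (hence every $\beta_k$) and every folding angle $\boldsymbol{\rho}$ unchanged, while scaling each offset $[a_k,b_k,0]^T$ to $[ca_k,cb_k,0]^T$. Writing the homogeneous scaling $D_c=\mathrm{diag}(c,c,c,1)$, each factor of \eqref{eq: general 1} transforms by conjugation, so the scaled transformation equals $D_c\,T_K(\boldsymbol{\rho})\,D_c^{-1}$; substituting into \eqref{eq: general 2} and using $p'=D_c p$ gives $f(p')=D_c f(p)=cf(p)$. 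Since $D_c$ is a similarity it preserves normal directions and the ``top/bottom'' distinction, so $\lambda(p',q')=\lambda(p,q)$.

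For (2), I would establish the single matrix identity $\tilde Z\,T_K(\boldsymbol{\rho})\,\tilde Z=T_K(-\boldsymbol{\rho})$ with $\tilde Z=\mathrm{diag}(1,1,-1,1)$. This reduces to two facts about the factors in \eqref{eq: general 1}: conjugation by $\tilde Z$ fixes the $\beta_k$-rotation and the offset $[a_k,b_k,0]^T$ (both act in the $xy$-plane), whereas it sends the $\rho_k$-rotation about the $x$-axis to the $(-\rho_k)$-rotation. Inserting $\tilde Z\tilde Z=I$ between consecutive factors then yields the identity. Applying it to \eqref{eq: general 2} with $\boldsymbol{\rho}\to-\boldsymbol{\rho}$, $\boldsymbol{\rho}_0\to-\boldsymbol{\rho}_0$ and the point $\tilde Z[x,y,z]^T$ gives exactly the stated reflection relation; the symmetry of $\{\boldsymbol{\rho}\}_{P,C}$ about $\boldsymbol{0}$ and $\lambda(-\boldsymbol{\rho})=-\lambda(\boldsymbol{\rho})$ follow because reflecting a non-crossing folded state across $P_0$ produces another non-crossing folded state with reversed stacking.

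For (3), since $F$ is already a bijection by Definition \ref{configuation bijection}, it suffices to verify that $F$ and $F^{-1}$ are both continuous on the neighborhood in question. Continuity of $F$ is the easy direction: the entries of $T_K(\boldsymbol{\rho})$ are trigonometric, hence continuous in $\boldsymbol{\rho}$, so by \eqref{eq: general 2} each $f(p)$ varies continuously with $\boldsymbol{\rho}$, and $\lambda_n\to\lambda$ is built into the definition of convergence in the configuration space. For $F^{-1}$, I would recover each $\rho_j$ from the image as the signed dihedral angle between the two panels adjacent to $c_j$, a continuous function of $f$ under the supremum metric, so that $f_n\to f$ forces $\boldsymbol{\rho}_n\to\boldsymbol{\rho}$ while the $\lambda$-component is carried along unchanged; the sign ambiguity at $\rho_j=\pm\pi$ is resolved by the tracked order function. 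A continuous bijection with continuous inverse is a homeomorphism.

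The main obstacle is (4). The plan is to realise $\{\boldsymbol{\rho}\}_{P,C}$ inside the box $\prod_j[-\pi,\pi]$ and argue it is closed. The closing (consistency) constraints --- that around each inner vertex the ordered product of the matrices in \eqref{eq: general 1} returns to the identity --- are polynomial in $\cos\rho_j,\sin\rho_j$ and each involves only the finitely many creases incident to that vertex, so each is a closed condition and their intersection is closed. The non-crossing constraints are also closed, because crossing is an open condition: were a limit configuration to have two panels strictly interpenetrating, the same strict interpenetration would hold for all nearby configurations, contradicting their admissibility, whereas tangential contact is permitted and persists in the limit. For a paper with finitely many creases this makes $\{\boldsymbol{\rho}\}_{P,C}$ a closed bounded subset of $\mathbb{R}^n$, hence compact, the discrete alternative being exactly the $0$-dimensional case in which every admissible $\boldsymbol{\rho}$ is isolated (the paper rigid at each folded state). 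The genuinely delicate point, which I expect to be the crux, is the unbounded paper with countably many creases: one must fix the topology on $\prod_j[-\pi,\pi]$, since the product topology makes the box compact (Tychonoff) with all the above constraints still closed, yet this need not coincide with the supremum-metric topology induced by the folding motion. Pinning down the correct topology, verifying the two notions of convergence agree, and thereby separating the isolated-point (discrete) case from the continuum (compact) case is where the argument will require the most care.
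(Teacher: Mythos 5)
Your arguments for parts (1)--(3) are correct and essentially the same as the paper's: the paper proves (1) by the same direct scaling calculation, (2) by the same matrix identity (stated there as provable ``by induction and direct symbolic calculation'', which your conjugation by $\mathrm{diag}(1,1,-1,1)$ inserted between consecutive factors of \eqref{eq: general 1} simply makes explicit), and (3) by the same two-directional continuity check (smoothness of $T_K$ in $\boldsymbol{\rho}$ in one direction; in the other, the paper recovers $\boldsymbol{\rho}_n\rightarrow\boldsymbol{\rho}$ from convergence of the transformation matrices where you use the signed dihedral angles --- same substance). The only genuine divergence is in (4). You argue closedness of $\{\boldsymbol{\rho}\}_{P,C}$ directly in folding-angle space: the consistency constraints are trigonometric equations and hence cut out a closed set, and the non-crossing condition is closed because strict interpenetration is an open condition. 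The paper instead argues that the rigidly folded state space $\{(f,\lambda)\}_{P,C}$ is closed under its notion of limits (isometry and the order-function conditions are preserved in the limit) and then transports closedness back to $\{\boldsymbol{\rho}\}_{P,C}$ through the homeomorphism of statement (3). The two routes are equivalent in substance; yours is the more concrete, the paper's the more structural, and both conclude with ``closed and bounded, hence compact''. That final step is exactly the issue you flag: it is Heine--Borel, and it silently assumes finitely many inner creases so that $\{\boldsymbol{\rho}\}_{P,C}$ sits in a finite-dimensional box $\prod_j[-\pi,\pi]$. The paper does not address the countably-infinite case at all, so your unresolved worry about which topology to place on the infinite product is not a defect of your proof relative to the paper's --- it is a gap in the paper's own argument that you have correctly identified, though neither proof closes it.
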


\begin{proof}	
	Statement (1): Inflating $P$ means to keep all the sector angles and inflate the lengths of all the creases by $c$, so for any $p'=cp$, direct calculation gives $f(p')=cf(p)$. Also, inflating does not change the order function.
	
	Statement (2): This expression is equivalent to:	
	\begin{equation} 
	T_K(-\boldsymbol{\rho})T_K^{-1}(-\boldsymbol{\rho}_0)
	\left[ \begin{array}{c}
	x \\
	y \\
	-z \\
	1
	\end{array} \right] = \left[ \begin{array}{cccc}
	1 & 0 & 0 & 0\\
	0 & 1 & 0 & 0\\
	0 & 0 & -1 & 0\\
	0 & 0 & 0 & 1
	\end{array} \right] T_K(\boldsymbol{\rho})T_K^{-1}(\boldsymbol{\rho}_0)
	\left[ \begin{array}{c}
	x \\
	y \\
	z \\
	1
	\end{array} \right]
	\end{equation}		
	which can be proved by induction and direct symbolic calculation. When changing all the folding angles $\boldsymbol{\rho}$ to $-\boldsymbol{\rho}$ and calculate $\lambda$ from the same orientation, the order of panels is reversed, so $\lambda(-\boldsymbol{\rho})=-\lambda(\boldsymbol{\rho})$.
	
	Statement (3): In the neighbourhood of $(\boldsymbol{\rho},\lambda)$ that $F$ is defined, for any series of $(\boldsymbol{\rho}_n,\lambda_n) \rightarrow (\boldsymbol{\rho},\lambda)$, we need to prove $(f_n,\lambda_n) \rightarrow (f,\lambda)$. This is guaranteed because $f$ is smooth with respect to $\boldsymbol{\rho}$. We also need to prove that for any series of $(f_n,\lambda_n) \rightarrow (f,\lambda)$ we have $(\boldsymbol{\rho}_n,\lambda_n) \rightarrow (\boldsymbol{\rho},\lambda)$. This is because if $\sup |f_n(p)-f(p)| \rightarrow 0$, $T_{nK}(\boldsymbol{\rho})-T_K(\boldsymbol{\rho}) \rightarrow 0$, then $\boldsymbol{\rho}_n \rightarrow \boldsymbol{\rho}$. The case for $F^{-1}$ defined on a neighbourhood of a point in $\{(f,\lambda)\}_{P,C}$ is similar.
	
	Statement (4): If $\{\boldsymbol{\rho}\}_{P,C}$ is not discrete, the rigidly folded state space $\{(f, \lambda)\}_{P,C}$ is closed in the sense we define the limit of $f$ and $\lambda$, since the properties of an isometry function $f$ and order function $\lambda$ are preserved under limitation. From statement (3), $\{\boldsymbol{\rho}\}_{P,C}$ is closed. Because $\{\boldsymbol{\rho}\}_{P,C}$ is also bounded, it is compact.
\end{proof}

Since it is more convenient to study the rigid-foldability in the configuration space rather than in the rigidly folded state space, we provide the next conclusion.

\begin{thm} \label{thm for a paper}
	Given a creased paper $(P,C)$, $(f_1, \lambda_1)$ is rigid-foldable to $(f_2, \lambda_2)$ if and only if $(\boldsymbol{\rho}_1,\lambda_1)$ is 0-connected to $(\boldsymbol{\rho}_2,\lambda_2)$ in the configuration space $\{(\boldsymbol{\rho},\lambda)\}_{P,C}$.
\end{thm}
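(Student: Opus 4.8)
The plan is to reduce the biconditional to the single fact that the configuration map $F$ transports continuous paths faithfully in both directions, so that a rigid folding motion and a $0$-connecting path become literally the same object viewed through $F$. I would begin by unwinding both sides into statements about paths. By Definition \ref{defn: folded state}, "$(f_1,\lambda_1)$ is rigid-foldable to $(f_2,\lambda_2)$" means there is a rigid folding motion, i.e.\ a family $(f_t,\lambda_t)_{t\in[0,1]}$ of rigidly folded states with the prescribed endpoints, continuous in the supremum metric for $f_t$ and in the Section 11.5 sense for $\lambda_t$. On the other side, "$(\boldsymbol{\rho}_1,\lambda_1)$ is $0$-connected to $(\boldsymbol{\rho}_2,\lambda_2)$" means there is a path $(\boldsymbol{\rho}_t,\lambda_t)_{t\in[0,1]}$ inside $\{(\boldsymbol{\rho},\lambda)\}_{P,C}$ with the same endpoints, continuous in the natural topology for $\boldsymbol{\rho}_t$ and in the same Section 11.5 sense for $\lambda_t$. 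The task is then to move a path of one kind to a path of the other kind.

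For the forward direction, given a rigid folding motion I would set $(\boldsymbol{\rho}_t,\lambda_t):=F^{-1}(f_t,\lambda_t)$, which is well defined since $F$ is a bijection (Definition \ref{configuation bijection}) and each $(f_t,\lambda_t)$ lies in the rigidly folded state space. The image is a path in the configuration space with the correct endpoints because $\boldsymbol{\rho}_i$ is by definition the folding-angle set of $(f_i,\lambda_i)$. It remains to check continuity of $t\mapsto(\boldsymbol{\rho}_t,\lambda_t)$. The $\lambda$-component needs no argument: $F$ fixes $\lambda$, so the same function $\lambda_t$ appears on both sides and its Section 11.5 continuity is inherited verbatim. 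For the $\boldsymbol{\rho}$-component I would invoke Proposition \ref{properties of F}(3): continuity is a local property checked pointwise, and at every point $F^{-1}$ is continuous, with $\sup_p|f_n(p)-f(p)|\to 0$ forcing $\boldsymbol{\rho}_n\to\boldsymbol{\rho}$; composing with the continuous motion $t\mapsto(f_t,\lambda_t)$ yields continuity of $t\mapsto\boldsymbol{\rho}_t$.

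For the reverse direction, given a $0$-connecting path I would set $(f_t,\lambda_t):=F(\boldsymbol{\rho}_t,\lambda_t)$. Each image point is a genuine rigidly folded state, since the image of $F$ is exactly the rigidly folded state space and the explicit formula \eqref{eq: general 2} exhibits $f_t$ as an isometry whose restriction to every panel is the rigid motion $T_K(\boldsymbol{\rho}_t)T_K^{-1}(\boldsymbol{\rho}_0)$; thus the family consists of rigidly folded states, as a rigid folding motion requires. Continuity again splits: the $\lambda_t$ part is unchanged and hence continuous by hypothesis, while the $f_t$ part follows from the other half of Proposition \ref{properties of F}(3), namely that $\boldsymbol{\rho}_n\to\boldsymbol{\rho}$ implies $\sup_p|f_n(p)-f(p)|\to 0$ because the entries of $T_K$, and hence $f$, are smooth (indeed real-analytic) in $\boldsymbol{\rho}$. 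With matching endpoints this produces the required rigid folding motion, completing the equivalence.

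The step I expect to carry the real weight is the compatibility of the two notions of continuity: that the supremum metric on $f$ and the natural topology on $\boldsymbol{\rho}$ induce the same topology under $F$, which is precisely Proposition \ref{properties of F}(3). Everything else is bookkeeping, since bijectivity of $F$ handles existence and endpoints, and the fact that $F$ leaves $\lambda$ untouched removes any difficulty with order-function continuity, which would otherwise be the most delicate ingredient. The only point requiring care is that Proposition \ref{properties of F}(3) is phrased locally (a homeomorphism on neighborhoods), so I would emphasise that path continuity is a pointwise condition and the local statement is exactly what is needed; no global homeomorphism of the possibly non-manifold configuration space need be established.
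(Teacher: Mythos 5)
Your proposal is correct and follows essentially the same route as the paper's proof: both directions are handled by composing the given path with $F$ or $F^{-1}$ and invoking Proposition \ref{properties of F}(3) for continuity, with bijectivity of $F$ handling existence and endpoints. Your version is somewhat more explicit about the $\lambda$-component and the local nature of the homeomorphism statement, but the underlying argument is the same.
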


\begin{proof}
	The following proof is an extension of ``The combination of continuous functions is continuous.''	
	
	Sufficiency: If $(\boldsymbol{\rho}_1,\lambda_1)$ and $(\boldsymbol{\rho}_2,\lambda_2)$ are 0-connected in $\{(\boldsymbol{\rho},\lambda)\}_{P,C}$, we parametrize this path by $L: t \in [0,1] \rightarrow \{(\boldsymbol{\rho},\lambda)\}_{P,C}$. From statement (3) in Proposition \ref{properties of F}, on this path $L$, the configuration map $F: \{(\boldsymbol{\rho},\lambda)\}_{P,C} \rightarrow \{(f, \lambda)\}_{P,C}$ is continuous. It can be directly verified that the composite map $F \circ L: t \in [0,1] \rightarrow \{(f, \lambda)\}_{P,C}$ is also continuous. Therefore $(f_1, \lambda_1)$ is rigid-foldable to $(f_2, \lambda_2)$.
	
	Necessity: $(f_1, \lambda_1)$ is rigid-foldable to $(f_2, \lambda_2)$ means there exists a path in this function space $L' : t \in [0,1] \rightarrow \{(f, \lambda)\}_{P,C}$. Every point on this path corresponds with a point in the configuration space $\{(\boldsymbol{\rho},\lambda)\}_{P,C}$, and from statement (6) in Proposition \ref{properties of F}, the inverse of configuration map $F^{-1} : \{(f, \lambda)\}_{P,C} \rightarrow \{(\boldsymbol{\rho},\lambda)\}_{P,C}$ is continuous on $L'$. Similarly we can verify that the composite map $F^{-1} \circ L': t \in [0,1] \rightarrow \{(\boldsymbol{\rho},\lambda)\}_{P,C}$ is also continuous. Therefore $(\boldsymbol{\rho}_1,\lambda_1)$ and $(\boldsymbol{\rho}_2,\lambda_2)$ are 0-connected.
\end{proof}

\begin{defn} \label{defn: global}
If the configuration space is a collection of discrete points, we say it is 0-\textit{dimensional}. If the configuration space is $(\boldsymbol{0},\emptyset)$ or a collection of two points $(\boldsymbol{\rho},\lambda)$ and $(-\boldsymbol{\rho},-\lambda)$, we say it is \textit{trivial}, and this creased paper is \textit{globally rigid}.
\end{defn}

From Theorem \ref{thm for a paper}, the existence of non-trivial rigidly folded states and rigid-foldability are the shape and 0-connectedness of the configuration space $\{(\boldsymbol{\rho},\lambda)\}_{P,C}$, which is not easily characterized. We will mainly present three methods to study the configuration space: algebraic, geometric and numeric methods. Usually we focus on $\{\boldsymbol{\rho}\}_{P,C}$ and then check $\lambda$ when there are multiple $\lambda$ for a particular $\boldsymbol{\rho}$. For convenience from now on we use $(\boldsymbol{\rho},\lambda)$ to represent a rigidly folded state.

\section{Algebraic Analysis of the Configuration Space} \label{section: algebraic}

The algebraic method is to analyse possible position of panels around vertices (equation \eqref{eq: consistency 1}) and holes (equation \eqref{eq: consistency 2}) symbolically, then remove the solutions that do not satisfy the boundary constraints described below. Before further discussion we need some definitions.

\begin{defn} \label{def constraints}
	Given a creased paper $(P,C)$, $W_{P,C}$ is the solution space of the \textit{consistency constraints} given in equations \eqref{eq: consistency 1} and \eqref{eq: consistency 2} where every $\rho_j \in [-\pi, \pi]$. How they are derived for a developable creased paper is provided in \cite{tachi_rigid_2015}, which can also be applied to a general creased paper.
	\begin{enumerate} [(1)]
		\item At every inner degree-$n$ vertex: (see Figure \ref{fig: single creased papers}(a))
		\begin{equation} \label{eq: consistency 1}
		T_n(\boldsymbol{\rho}) = I
		\end{equation}
		where,
		\begin{align*} 
		T_n(\boldsymbol{\rho})=\prod_{1}^{n}
		\left[ \begin{array}{ccc}
		\cos \alpha_{j} & -\sin \alpha_{j} & 0\\
		\sin \alpha_{j} &  \cos \alpha_{j} & 0\\
		0                &                 0 & 1
		\end{array} \right]
		\left[ \begin{array}{ccc}
		1                &                 0 & 0             \\
		0                &     \cos \rho_{j} & -\sin \rho_{j}\\
		0                &     \sin \rho_{j} &  \cos \rho_{j}
		\end{array} \right]
		\end{align*}
		$\alpha_{j}$ is between $c_{j-1}$ and $c_j$ ($j \in [2, n]$). $\alpha_{1}$ is between $c_n$ and $c_1$. $R$ is formed by post-multiplication. Equation \eqref{eq: consistency 1} can be derived by following Proposition \ref{prop explicit def of f} and choosing the path to be the one shown in Figure \ref{fig: single creased papers}(a). Only three of the nine equations are independent, for instance, the diagonal elements.
		\item Suppose there are $h$ holes (If the Euler Characteristic is $2$ or $1$, there is no such constraint). For a hole with $n$ inner creases (see Figure \ref{fig: single creased papers}(b)), called a \textit{degree-$n$} hole:
		\begin{equation} \label{eq: consistency 2}
		T_n(\boldsymbol{\rho}) = I
		\end{equation}
		where,
		\begin{align*} 
		T_n(\boldsymbol{\rho})=\prod_{1}^{n}
		\left[ \begin{array}{cccc}
		\cos \beta_j & -\sin \beta_j & 0 & a_j \\
		\sin \beta_j & \cos \beta_j & 0 & b_j \\
		0 & 0 & 1 & 0   \\
		0 & 0 & 0 & 1
		\end{array} \right]
		\left[ \begin{array}{cccc}
		1 & 0 & 0 & 0 \\
		0 & \cos \rho_j & -\sin \rho_j & 0 \\
		0 & \sin \rho_j & \cos \rho_j & 0 \\
		0 & 0 & 0 & 1
		\end{array} \right]
		\end{align*}
		Equation \eqref{eq: consistency 2} can be derived by following Proposition \ref{prop explicit def of f} and choosing the path to be the one shown in Figure \ref{fig: single creased papers}(b). $T$ is formed by post-multiplication.  Only six of the sixteen equations are independent. Three of them are in the top left $3 \times 3$ rotation matrix, the other three are the elements from row 1 to row 3 in column 4, which are automatically satisfied if the inner creases are concurrent.
	\end{enumerate}	
	
	The consistency constraints may not include every folding angle, so we define $\widetilde{W}_{P,C}$ as the \textit{extension} of the solution space $W_{P,C}$ to include the folding angles not mentioned in $W_{P,C}$, also with range $[-\pi, \pi]$. $N_{P,C}$ is the collection of all the solutions that do not satisfy the conditions for order function $\lambda$, i.e.\ panels self-intersect, which are called the \textit{boundary constraints} because they are unilateral constraints that only contribute to the boundary of $\{\boldsymbol{\rho}\}_{P,C}$. Some examples have been mentioned in \cite{hull_modelling_2002}.
\end{defn}

\begin{rem}
	Although the lengths of creases may not be involved in the consistency constraints, they are important in the boundary constraints. If the interior of crease pattern is not a forest (see Section \ref{subsection: forest}), the consistency constraints around different vertices or holes are not independent.
\end{rem}

\begin{figure}[!tb]
	\noindent \begin{centering}
		\includegraphics[width=1\linewidth]{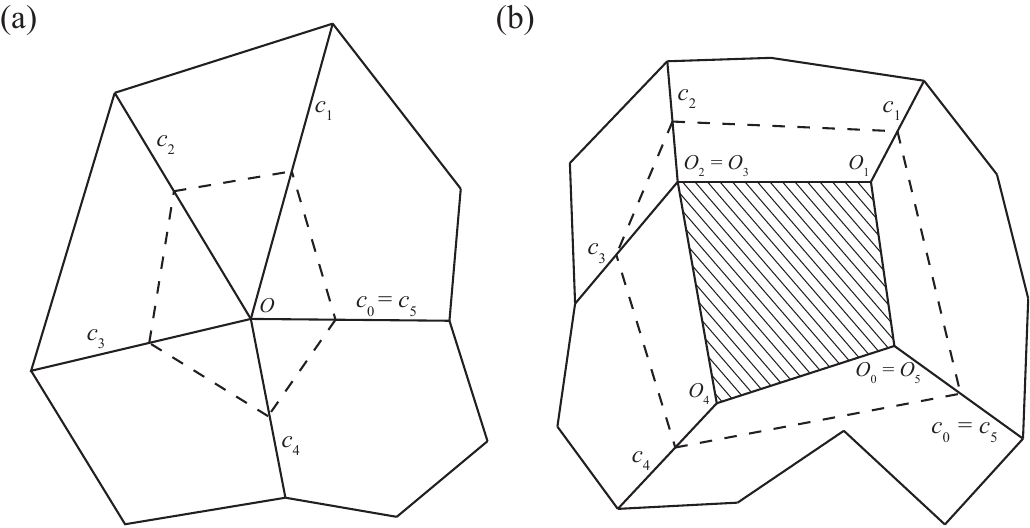}
		\par\end{centering}
	
	\caption{\label{fig: single creased papers}(a) is a degree-5 single-vertex creased paper. (b) is a degree-5 single-hole creased paper, where the shaded area is a hole. For each paper, a path is shown to illustrate the consistency constraints in Definition \ref{def constraints}. We label intermediate inner creases $c_k$ and origins $O_k$ ($k \in [0,5]$). Note that (a) has one boundary component and (b) has two boundary components. We don't need to consider the consistency constraint around the outer boundary because it is naturally satisfied (see Definition \ref{def constraints}).}
\end{figure}

\begin{thm} \label{thm configuration space}
	\cite{tachi_rigid_2015, hull_modelling_2002}
	\begin{equation} 
	\{\boldsymbol{\rho}\}_{P,C} = \widetilde{W}_{P,C} \backslash N_{P,C}
	\end{equation} 
\end{thm}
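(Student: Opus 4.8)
The plan is to establish the set equality by proving the two inclusions separately, reading $\{\boldsymbol{\rho}\}_{P,C}$ through Definition \ref{configuation bijection}: a folding-angle vector belongs to it exactly when the path-transport construction of Proposition \ref{prop explicit def of f} produces a genuine rigidly folded state $(f,\lambda)$. The organising observation is that formula \eqref{eq: general 2} defines $f$ along one chosen path from the fixed panel $P_0$, so the only thing that can fail is \emph{path-dependence}: $f$ is a legitimate map on $P$ if and only if the product of homogeneous transfer matrices of the form \eqref{eq: general 1} around every closed path in the panel-adjacency (dual) graph equals the identity. I would therefore recast the whole argument in terms of the holonomy $\Phi$ sending each closed dual path to its total transfer matrix in the group of rigid motions $SE(3)$; since concatenation of paths corresponds to the post-multiplication used in \eqref{eq: general 1}, $\Phi$ is a group homomorphism.

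For the inclusion $\{\boldsymbol{\rho}\}_{P,C}\subseteq\widetilde{W}_{P,C}\setminus N_{P,C}$ (necessity), I would start from an actual rigidly folded state with angles $\boldsymbol{\rho}$. Because $f$ is single-valued on $P$, $\Phi$ is trivial on \emph{every} closed path; specialising to the small loop encircling each inner vertex and to a loop around each hole gives precisely equations \eqref{eq: consistency 1} and \eqref{eq: consistency 2}, so $\boldsymbol{\rho}\in W_{P,C}$ and, after adjoining the unconstrained angles, $\boldsymbol{\rho}\in\widetilde{W}_{P,C}$. Since by Definition \ref{defn: basic} a folded state must admit an order function with no crossing, $\boldsymbol{\rho}$ cannot lie in $N_{P,C}$, which yields the claim.

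For the reverse inclusion (sufficiency) I would take $\boldsymbol{\rho}\in\widetilde{W}_{P,C}\setminus N_{P,C}$ and build $f$ by transporting outward from $P_0$ along a spanning tree of the dual graph, which is unambiguous. The work is to show $\Phi$ is trivial on \emph{all} closed paths, not only on the vertex and hole loops controlled by the consistency constraints. This is the topological core: regarding the dual-graph loops inside the surface $P$ punctured at its inner vertices, I would invoke that $\pi_1$ of this punctured surface is generated by the loops around the inner-vertex punctures together with the loops carrying the nontrivial topology of $P$ (its holes, with handles and cross-caps accounted for through the same Euler-characteristic bookkeeping recorded in Definition \ref{def constraints}). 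As $\Phi$ is a homomorphism annihilating each generator, it annihilates all of $\pi_1$, so $f$ is path-independent and is a bona-fide piecewise isometry, a rotation-translation on every panel. Finally $\boldsymbol{\rho}\notin N_{P,C}$ means the panels do not cross, so the conditions of Definition \ref{defn: basic} are satisfiable by some $\lambda$, producing an honest rigidly folded state with $\boldsymbol{\rho}\in\{\boldsymbol{\rho}\}_{P,C}$.

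The step I expect to be the main obstacle is exactly this generation claim in the sufficiency direction — that the vertex and hole loops generate $\pi_1$ of the punctured paper, so the finitely many constraints \eqref{eq: consistency 1}--\eqref{eq: consistency 2} really are complete. Care is essential because $SE(3)$ is non-abelian, so I cannot argue in homology: I must use a true $\pi_1$ generating set, conjugating every local vertex loop to the common basepoint $P_0$ (and noting that conjugates of the identity remain the identity, so the conjugated generators still map to $I$), and I must treat the higher-genus and non-orientable cases so that ``holes'' in the statement genuinely exhausts the fundamental group. A useful internal check is that the loop around the outer boundary is the single redundant relation among these generators, explaining why no consistency constraint is imposed there, consistent with the remark in Definition \ref{def constraints}. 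The remaining ingredients — that transport composes multiplicatively, that each panel map is a rotation-translation by Proposition \ref{prop explicit def of f}, and that avoiding $N_{P,C}$ is synonymous with admissibility of $\lambda$ — are immediate from the definitions.
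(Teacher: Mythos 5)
The paper offers no proof of Theorem \ref{thm configuration space} at all --- it is stated with citations, and the derivation in those references is precisely the loop-closure argument you propose --- so you are supplying an argument the paper omits rather than diverging from one it gives. Your outline is the right one and is essentially complete for the setting the citations actually treat: necessity follows from single-valuedness of $f$ together with the definition of $N_{P,C}$, and sufficiency reduces to showing that triviality of the holonomy $\Phi$ on the vertex loops and hole loops forces triviality on every closed dual path. One routine step you should still write down is why $\Phi$ descends to $\pi_1$ of the paper punctured at its inner vertices in the first place: sliding a dual path within a panel leaves the crease sequence unchanged, and pushing it back and forth across a crease inserts a transfer matrix of the form \eqref{eq: general 1} together with its inverse, so the holonomy depends only on the homotopy class. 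Your observation that conjugation is harmless because the generators map to the identity, and that the outer-boundary loop is the single redundant relation, are both correct and match the remark in Definition \ref{def constraints}.

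The step you flag as the main obstacle is a genuine issue, but it is a gap in the theorem's stated scope rather than in your argument. For a planar paper (a disk with holes) the loops around inner vertices together with the loops around holes freely generate $\pi_1$ of the punctured surface, so your sufficiency direction closes. But Definition \ref{def constraints} claims applicability to a general creased paper, and for a paper with handles or cross-caps (e.g.\ the Euler-characteristic $-2$ example in Figure \ref{fig:general papers}) the listed constraints omit the handle generators of $\pi_1$; additional closure conditions of the form \eqref{eq: consistency 2} along those loops must be adjoined before the equality $\{\boldsymbol{\rho}\}_{P,C}=\widetilde{W}_{P,C}\setminus N_{P,C}$ can hold. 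If you state your theorem for planar crease patterns, or explicitly enlarge the constraint set to a full generating system of $\pi_1(P\setminus V)$, your proof is correct.
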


\begin{cor}
	Some properties of $W_{P,C}$ and $N_{P,C}$.
	\begin{enumerate}[(1)]
		\item $W_{P,C}$ is symmetric to $\boldsymbol{0}$, so $N_{P,C}$ is symmetric to $\boldsymbol{0}$.
		\item $W_{P,C}$ is discrete or compact, so $N_{P,C}$ is open and bounded if not discrete.
	\end{enumerate}
\end{cor}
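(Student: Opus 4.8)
The plan is to read both parts off the structural facts already in hand: the symmetry statement (2) and the ``discrete or compact'' statement (4) of Proposition \ref{properties of F}, together with Theorem \ref{thm configuration space}. For part (1), I would first observe that each consistency constraint $T_n(\boldsymbol{\rho})=I$ in Definition \ref{def constraints} is assembled from exactly the two kinds of factors that appear in the transformation $T_K$ of Proposition \ref{prop explicit def of f}: rotations about the $z$-axis by the fixed sector angles (or the fixed $\beta_j$, carrying translations $[a_j,b_j,0]^T$ whose $z$-component vanishes), and rotations $B_j(\rho_j)$ about the $x$-axis by the folding angles. Let $D=\mathrm{diag}(1,1,-1)$ in the vertex case and $\mathrm{diag}(1,1,-1,1)$ in the hole case. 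A direct check shows that $D$ commutes with the sector-angle and $\beta_j$ factors and fixes the translations (since their $z$-entry is $0$), while $D\,B_j(\rho_j)\,D=B_j(-\rho_j)$. Inserting $D^2=I$ between consecutive factors then yields $D\,T_n(\boldsymbol{\rho})\,D=T_n(-\boldsymbol{\rho})$, so that $T_n(\boldsymbol{\rho})=I$ if and only if $T_n(-\boldsymbol{\rho})=I$; since the range $[-\pi,\pi]$ is itself symmetric, $W_{P,C}$ is symmetric to $\boldsymbol{0}$, and hence so is its extension $\widetilde{W}_{P,C}$. This is just the specialization of statement (2) of Proposition \ref{properties of F} to the closed paths around a vertex or a hole.

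To conclude the symmetry of $N_{P,C}$, I would invoke statement (2) of Proposition \ref{properties of F} directly: the folded state at $-\boldsymbol{\rho}$ is the mirror image of the state at $\boldsymbol{\rho}$ through the plane of $P_0$, with $\lambda(-\boldsymbol{\rho})=-\lambda(\boldsymbol{\rho})$. A global reflection is an isometry of $\mathbb{R}^3$, so a forbidden self-intersection of panels occurs at $\boldsymbol{\rho}$ exactly when it occurs at $-\boldsymbol{\rho}$; therefore $N_{P,C}$ is symmetric to $\boldsymbol{0}$. For part (2), I would note that $W_{P,C}$ is the common zero set of the analytic maps $\boldsymbol{\rho}\mapsto T_n(\boldsymbol{\rho})-I$ restricted to the box $[-\pi,\pi]^m$, where $m$ is the number of folding angles. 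These maps are continuous, so $W_{P,C}$ is closed; it lies in a bounded box, so it is bounded; a closed bounded subset of $\mathbb{R}^m$ is compact, and in the degenerate case where every solution is isolated, closedness and boundedness force finiteness, which is the discrete alternative. The same argument applies verbatim to $\widetilde{W}_{P,C}$. Assuming we are not in the discrete case, the rigidly folded state space $\{\boldsymbol{\rho}\}_{P,C}=\widetilde{W}_{P,C}\setminus N_{P,C}$ is closed (as in statement (4) of Proposition \ref{properties of F}), because the non-crossing order-function conditions are preserved under limits and are non-strict, permitting contact. Hence $N_{P,C}=\widetilde{W}_{P,C}\setminus\{\boldsymbol{\rho}\}_{P,C}$ is relatively open in $\widetilde{W}_{P,C}$, and it is bounded since $\widetilde{W}_{P,C}\subseteq[-\pi,\pi]^m$; equivalently, openness follows directly because a strict forbidden crossing persists under small perturbations of $\boldsymbol{\rho}$ by the smoothness of $f$ in $\boldsymbol{\rho}$ from Proposition \ref{prop explicit def of f}.

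The main obstacle I anticipate is not the symmetry computation, which is routine matrix bookkeeping, but making precise the sense in which $N_{P,C}$ is ``open''. Because $\widetilde{W}_{P,C}$ is generically a lower-dimensional subset of $[-\pi,\pi]^m$, $N_{P,C}$ cannot be open in $\mathbb{R}^m$; the correct reading is relative openness inside $\widetilde{W}_{P,C}$, which is exactly what the complement-of-a-closed-set argument delivers. The delicate point is the treatment of the interface between crossing and non-crossing configurations: there the panels are in contact and the order-function conditions hold with equality, so such configurations belong to $\{\boldsymbol{\rho}\}_{P,C}$ rather than to $N_{P,C}$. This is precisely what justifies the remark in Definition \ref{def constraints} that the boundary constraints are unilateral and only contribute to the boundary of $\{\boldsymbol{\rho}\}_{P,C}$, and it is the step I would state most carefully.
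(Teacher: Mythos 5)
Your proposal is correct and follows essentially the same route as the paper: part (1) reduces to a symbolic identity showing $T_n(\boldsymbol{\rho})=I$ iff $T_n(-\boldsymbol{\rho})=I$ (your conjugation $D\,T_n(\boldsymbol{\rho})\,D=T_n(-\boldsymbol{\rho})$ is a cleaner packaging of the paper's inductive comparison of $T_n(\boldsymbol{\rho})\pm T_n(-\boldsymbol{\rho})$) combined with the symmetry of $\{\boldsymbol{\rho}\}_{P,C}$ from Proposition \ref{properties of F}, and part (2) is the same closed-and-bounded-implies-compact argument. Your explicit treatment of the relative openness of $N_{P,C}$ and of the contact-versus-crossing boundary is more careful than the paper's one-line version, but it is a refinement rather than a different approach.
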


\begin{proof}
	Statement (1) can be proved by comparing the expressions of $T_n(\boldsymbol{\rho})+T_n(-\boldsymbol{\rho})$ and $T_n(\boldsymbol{\rho})-T_n(-\boldsymbol{\rho})$. With induction and direct symbolic calculation we will know if $T_n(\boldsymbol{\rho})=0$, $T_n(\boldsymbol{-\rho})=0$. Because $\{\boldsymbol{\rho}\}_{P,C}$ is also symmetric to $\boldsymbol{0}$, $N_{P,C}$ is symmetric to $\boldsymbol{0}$. Statement (2) is satisfied because if $W_{P,C}$ is not a discrete set, any limit point of $W_{P,C}$ is also a solution, which means $W_{P,C}$ is closed, also, $W_{P,C}$ is bounded.
\end{proof}

From Definition \ref{def constraints}, the consistency constraints are derived from the consistency of panels around an inner vertex or a hole, therefore we define:  

\begin{defn}
	Given a creased paper $(P,C)$, for each inner vertex $v$, the restriction of $(P,C)$ on all panels incident to $v$ forms a \textit{single-vertex creased paper}. $v$ is the \textit{centre vertex}. Similarly, consider a hole with boundary $h$, whose incident inner creases are not concurrent. The restriction of $(P,C)$ on all panels incident to $h$ forms a \textit{single-hole creased paper} (see Figure \ref{fig: single creased papers}). If the inner creases incident to a hole are concurrent, we still regard it as a single-vertex creased paper, whose centre vertex is the intersection of its inner creases. A single-vertex or single-hole creased paper is called a \textit{single creased paper}.
\end{defn}

\begin{cor} \label{cor local to global}
	$\widetilde{W}_{P,C}$ is the intersection of extensions of the solution spaces of all single creased papers.	
\end{cor}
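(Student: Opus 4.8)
The plan is to prove the claimed set equality by showing that both sides are carved out, inside the same ambient box $[-\pi,\pi]^{m}$ (where $m$ is the number of inner creases of $(P,C)$), by exactly the same family of equations, so that the statement reduces to the tautology that a simultaneous solution set is the intersection of the individual solution sets. First I would unwind the definitions: by Definition \ref{def constraints}, $W_{P,C}$ is the common solution set of the consistency constraints \eqref{eq: consistency 1} at each inner vertex and \eqref{eq: consistency 2} at each hole, and $\widetilde{W}_{P,C}$ is its extension, i.e.\ the set of all $\boldsymbol{\rho}\in[-\pi,\pi]^{m}$ satisfying every one of these equations, with any folding angle not appearing in a constraint left free in $[-\pi,\pi]$.

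The key step is to identify, for each single creased paper $s$ in the collection (one per inner vertex and one per hole), its own consistency constraint with the corresponding constraint of $(P,C)$. Because $s$ is by definition the restriction of $(P,C)$ to the panels incident to its center vertex or hole, the sector angles around that vertex or hole and their cyclic order are inherited unchanged; hence the matrix product $T_n(\boldsymbol{\rho})$ built as in Proposition \ref{prop explicit def of f} is literally the same expression in $s$ as in $(P,C)$, and the equation $T_n(\boldsymbol{\rho})=I$ is the same equation in the same folding angles. Its extension $\widetilde{W}_s$ is therefore precisely the set of $\boldsymbol{\rho}\in[-\pi,\pi]^{m}$ satisfying that one constraint, with all remaining folding angles free.

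Finally I would intersect: a point lies in $\bigcap_s \widetilde{W}_s$ iff it lies in $[-\pi,\pi]^{m}$ and satisfies the constraint of every single creased paper $s$, which by the previous step is iff it satisfies every consistency constraint of $(P,C)$, i.e.\ iff it lies in $\widetilde{W}_{P,C}$. The main obstacle is not analytic but a matter of bookkeeping: one must check that the restriction defining a single creased paper really does reproduce the vertex/hole constraint term-by-term, and separately handle the degenerate case of a hole whose incident inner creases are concurrent, which is reclassified as a single-vertex creased paper centered at the common intersection point -- there the column-$4$ (translation) part of \eqref{eq: consistency 2} is automatically satisfied and the equation collapses to the rotational form \eqref{eq: consistency 1}. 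I would also note, as remarked after Definition \ref{def constraints}, that the constraints from different vertices and holes need not be independent when the interior of $C$ is not a forest; this affects only the dimension and redundancy of the system, not the set equality, since a simultaneous solution set equals the intersection of the individual solution sets whether or not the defining equations are independent.
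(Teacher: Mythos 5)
Your proposal is correct and matches the paper's reasoning: the paper states this corollary without proof, treating it as immediate from Definition \ref{def constraints}, since the consistency constraints of $(P,C)$ are precisely the union, over all inner vertices and holes, of the constraints defining each single creased paper, so the simultaneous (extended) solution set is the intersection of the individual (extended) ones. Your extra care with the concurrent-creases hole case and with the non-independence of constraints is sound but not needed for the set equality.
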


Corollary \ref{cor local to global} clarifies the link between global and local rigid-foldability, and explains why local rigid-foldability cannot guarantee global rigid-foldability, since even the intersection of 0-connected spaces is not necessarily 0-connected. 

Next we want to apply results in classical rigidity theory to rigid origami, based on those constraints on the configuration space. The collection of equations \eqref{eq: consistency 1} and \eqref{eq: consistency 2} is a system of trigonometric equations $\boldsymbol{A}(\boldsymbol{\rho})=\boldsymbol{0}$. By using the \textit{normalized folding angles} $\boldsymbol{t} : t_j = \tan \frac{\rho_j}{2}$ ($t_j \in [-\infty,\infty]$), the consistency constraints can be written as a system of polynomial equations $\boldsymbol{A}(\boldsymbol{t})=\boldsymbol{0}$ with:

\begin{equation} \label{eq: polynomial}
\cos \rho_j = \dfrac{1-t_j^2}{1+t_j^2}, \quad \sin \rho_j = \dfrac{2t_j}{1+t_j^2}
\end{equation}

For convenience, in this section we will mainly use the above representation $\boldsymbol{A}(\boldsymbol{t})=\boldsymbol{0}$ for further analysis. The degree for each folding angle in $\boldsymbol{A}(\boldsymbol{t})=\boldsymbol{0}$ is 2.

\subsection{Equivalent Definitions on Rigid-foldability}

Here we will follow the idea that is commonly understood for bar-joint frameworks, and give several equivalent definitions on rigid-foldability, which also holds if expressed by the folding angles $\boldsymbol{\rho}$.

\begin{defn}
	An \textit{analytic path} $\gamma: [s_1, s_2] \ni s \rightarrow (\boldsymbol{t}, \lambda)_{P,C}$ that joins $(\boldsymbol{t}_1,\lambda_1)$ and $(\boldsymbol{t}_2,\lambda_2)$ can be expressed as:
	\begin{equation} \label{analytic curve}
	t_i(s)=\sum_{n=0}^{\infty}a_{in}(s-s_1)^n
	\end{equation}
	where $t_i$ are the components of $\gamma$ and $a_{in}$ are the coefficients for the power series of $t_i$.
\end{defn}

\begin{thm}
	Given a creased paper $(P,C)$, the three following definitions on rigid-foldability are equivalent.
	
	\begin{enumerate} [(1)]
		\item (Analytic) Given a rigidly folded state ($\boldsymbol{t}$, $\lambda$), if there exists another rigidly folded state ($\boldsymbol{t}'$, $\lambda'$), where $\boldsymbol{t}'$ is connected to $\boldsymbol{t}$ in the folding angle space $\{\boldsymbol{t}\}_{P,C}$ by an analytic path, and $\lambda'$ is 0-connected to $\lambda$, then ($\boldsymbol{t}$, $\lambda$) is rigid-foldable to ($\boldsymbol{t}'$, $\lambda'$).
		\item (Continuous) Given a rigidly folded state ($\boldsymbol{t}$, $\lambda$), if there exists another rigidly folded state ($\boldsymbol{t}'$, $\lambda'$) which is 0-connected to ($\boldsymbol{t}$, $\lambda$) in the configuration space $(\boldsymbol{t}, \lambda)_{P,C}$, then ($\boldsymbol{t}$, $\lambda$) is rigid-foldable to ($\boldsymbol{t}'$, $\lambda'$).
		\item (Topological) Given a rigidly folded state ($\boldsymbol{t}$, $\lambda$), $\forall \epsilon>0$, if there exists another rigidly folded state ($\boldsymbol{t}_\epsilon$, $\lambda_\epsilon$), s.t. $0<|\boldsymbol{t}_\epsilon-\boldsymbol{t}|<\epsilon$, and all $\{\lambda_\epsilon\}$ satisfy the continuity condition mentioned in Section 11.5 of \cite{demaine_geometric_2007}, then ($\boldsymbol{t}$, $\lambda$) is rigid-foldable.
	\end{enumerate}
\end{thm}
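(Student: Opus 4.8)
The plan is to prove the three conditions equivalent by establishing the cycle $(1) \Rightarrow (2) \Rightarrow (3) \Rightarrow (1)$, throughout identifying rigid-foldability with $0$-connectedness in the configuration space via Theorem~\ref{thm for a paper}, and exploiting that in the normalized coordinates $\boldsymbol{t}$ the consistency constraints $\boldsymbol{A}(\boldsymbol{t})=\boldsymbol{0}$ are polynomial (degree $2$ in each $t_j$), so that $\{\boldsymbol{t}\}_{P,C}=\widetilde{W}_{P,C}\setminus N_{P,C}$ is a semialgebraic set, namely a real algebraic set with an open set removed. The implication $(1) \Rightarrow (2)$ is the soft one: an analytic path is a fortiori continuous, so a witnessing analytic arc in $\{\boldsymbol{t}\}_{P,C}$ together with the assumed $0$-connectedness of $\lambda$ assembles into a continuous path in the full configuration space $(\boldsymbol{t},\lambda)_{P,C}$, whence continuous rigid-foldability follows at once.

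For $(2) \Rightarrow (3)$, given a $0$-connecting path $L$ from $(\boldsymbol{t},\lambda)$ to a distinct state, I would restrict $L$ to arbitrarily small times to produce states $(\boldsymbol{t}_\epsilon,\lambda_\epsilon)$ as close as desired to $(\boldsymbol{t},\lambda)$; continuity of $L$ forces $|\boldsymbol{t}_\epsilon-\boldsymbol{t}|<\epsilon$, and the $\lambda_\epsilon$ inherit the Section~11.5 admissibility condition by construction. The only delicate point is to guarantee $|\boldsymbol{t}_\epsilon-\boldsymbol{t}|>0$, i.e.\ that the path genuinely moves the folding angles rather than only relabelling the order function. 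This holds because at fixed $\boldsymbol{t}$ the folded geometry $f$ is fixed, hence the contact set is fixed, and the continuity requirement on $\lambda$ then forbids any jump in its discrete values, so $\lambda$ is locally constant; a path reaching a \emph{distinct} state must therefore change $\boldsymbol{t}$.

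The hard direction is $(3) \Rightarrow (1)$, and this is where the algebraic structure is indispensable. By hypothesis $\boldsymbol{t}$ is a non-isolated point of the semialgebraic set $\{\boldsymbol{t}\}_{P,C}$. The plan is to invoke the curve selection lemma of real algebraic geometry: for a semialgebraic set $S$ and a non-isolated point $\boldsymbol{t}\in S$, there exists a non-constant real-analytic arc $\gamma:[0,\delta)\to\mathbb{R}^m$ with $\gamma(0)=\boldsymbol{t}$ and $\gamma((0,\delta))\subset S$. Applying this to $S=\{\boldsymbol{t}\}_{P,C}=\widetilde{W}_{P,C}\setminus N_{P,C}$ yields exactly the power-series arc of \eqref{analytic curve} lying inside the configuration space, and because $N_{P,C}$ is open the arc automatically stays clear of the boundary constraints. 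Along such an arc the contact structure varies algebraically, so the order function is locally constant and $\lambda'$ is trivially $0$-connected to $\lambda$, giving definition (1). One technical wrinkle to dispatch is the behaviour at $\rho_j=\pm\pi$, where $t_j=\tan(\rho_j/2)=\pm\infty$: here I would either clear the denominators $1+t_j^2$ and run the argument projectively, or pass to a local chart using the trigonometric equations $\boldsymbol{A}(\boldsymbol{\rho})=\boldsymbol{0}$ directly on the compact box $\rho_j\in[-\pi,\pi]$.

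The main obstacle, and the genuine content of the theorem, is precisely this last step: upgrading mere topological accumulation of nearby states (non-isolation) to the existence of an honest analytic path. This implication is false for arbitrary sets and succeeds here only because the defining equations are polynomial and the removed set $N_{P,C}$ is open, placing the configuration space squarely in the semialgebraic category where the curve selection lemma applies.
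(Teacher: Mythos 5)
Your proposal follows essentially the same route as the paper: the cycle $(1)\Rightarrow(2)\Rightarrow(3)$ as the easy implications, and the curve selection lemma of real (semi)algebraic geometry applied to the polynomial system $\boldsymbol{A}(\boldsymbol{t})=\boldsymbol{0}$ as the key tool for $(3)\Rightarrow(1)$, exactly as in the paper's citation of Milnor. Your additional care --- checking that the path in $(2)\Rightarrow(3)$ genuinely moves $\boldsymbol{t}$, applying the lemma to the semialgebraic set $\widetilde{W}_{P,C}\setminus N_{P,C}$ rather than to $\widetilde{W}_{P,C}$ alone, and flagging the chart issue at $t_j=\pm\infty$ --- fills in details the paper leaves implicit but does not change the argument.
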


%\item (Algebraic) Given a rigidly folded state ($\boldsymbol{t}$, $\lambda$), $\forall \epsilon>0$, if there exists another rigidly folded state ($\boldsymbol{t}_\epsilon$, $\lambda_\epsilon$), s.t. $\boldsymbol{t}_\epsilon \in \boldsymbol{A}^{-1}(\boldsymbol{0}) \cap B_\epsilon(\boldsymbol{t})$ ($B_\epsilon(\boldsymbol{t})$ is the $\epsilon$-ball of $\boldsymbol{t}$ in the Euclidean space), then ($\boldsymbol{t}$, $\lambda$) is rigid-foldable.

\begin{proof}
	(1) $\rightarrow$ (2) and (2) $\rightarrow$ (3) are natural. Next we prove (3) $\rightarrow$ (1). If from (3), ($\boldsymbol{t}$, $\lambda$) is rigid-foldable, then $\boldsymbol{t}$ is a limit point of the solution of $\boldsymbol{A}(\boldsymbol{t})=\boldsymbol{0}$. From the curve selection lemma (Section 3, \cite{milnor_singular_1968}), there exists a real analytic curve $\gamma$ starting from $\boldsymbol{t}$ and ending at another point $\boldsymbol{t}'$. The continuity of order function is naturally satisfied since the path in (1) is selected from (3).
\end{proof}

\subsection{Analysis from the Tangent Space}

If a creased paper $(P,C)$ has $i$ inner vertices, $j$ inner creases and $h$ holes, the number of equations in $\boldsymbol{A}$ is $3i+6h$, while the number of variables are $j$. We define $\boldsymbol{A}: \mathbb{R}^j \rightarrow \mathbb{R}^{3i+6h}$, then $\boldsymbol{A}$ is smooth (Unlike if the domain were $\{\boldsymbol{t}\}_{P,C}$, when $\boldsymbol{A}$ may not have definition in the neighborhood of a point), and write the Jacobian of $\boldsymbol{A}$ as $\mathrm{J}\boldsymbol{A}= \frac{\rm{d} \boldsymbol{A}}{\rm{d} \boldsymbol{t}}$, whose size is $(3i+6h) \times j$. The expression of $\mathrm{J}\boldsymbol{A}$ with respect to $\boldsymbol{\rho}$ is given in \cite{demaine_zero-area_2016}. If $\rm{rank}(\mathrm{J}\boldsymbol{A}) = \min (3\mathit{i}+6\mathit{h},\mathit{j})$, i.e. reaches its maximum, we say this rigidly folded state is \textit{regular}, otherwise it is \textit{special}.

For every point $\boldsymbol{t} \in \{\boldsymbol{t}\}_{P,C}$, if we denote the degree of freedom by $\deg(\boldsymbol{t})= j-\rm{rank}(\mathrm{J}\boldsymbol{A})$, $\{\boldsymbol{t}\}_{P,C}$ is locally a $\deg(\boldsymbol{t})$ dimensional smooth manifold. It also has a $\deg(\boldsymbol{t})$ dimensional tangent space that can be represented as $\mathrm{J}\boldsymbol{A} \cdot \mathrm{d}\boldsymbol{t}= \boldsymbol{0}$ in $\mathbb{R}^j$. A vector $\mathrm{d}\boldsymbol{t}$ in the tangent space $T_{\boldsymbol{t}}$ is called a \textit{first-order flex}. If $\deg (\boldsymbol{t})=0$, this rigidly folded state $(\boldsymbol{t},\lambda)$ is \textit{first-order rigid}, otherwise \textit{first-order rigid-foldable}. For a regular point, the counting of degree of freedom is provided in \cite{tachi_geometric_2010}.

\begin{thm} \label{thm: first-order}
	\begin{enumerate} [(1)]
		\item If $(\boldsymbol{t},\lambda)$ is first-order rigid, it is rigid. Equivalently, if $(\boldsymbol{t},\lambda)$ is rigid-foldable, it is first-order rigid-foldable.
		\item If $(\boldsymbol{t},\lambda)$ is regular and rigid, it is first-order rigid. Equivalently, if $(\boldsymbol{t},\lambda)$ is regular and first-order rigid-foldable, it is rigid-foldable.
	\end{enumerate}
\end{thm}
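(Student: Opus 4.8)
The plan is to treat the two statements as the two halves of the standard ``infinitesimal rigidity versus rigidity'' dichotomy from bar-joint theory, transported to the folding-angle variety $\{\boldsymbol{t}\}_{P,C}$ cut out by $\boldsymbol{A}(\boldsymbol{t})=\boldsymbol{0}$. In each case I would prove the contrapositive, after which the two displayed ``equivalently'' reformulations are immediate, being logical contrapositives of one another.

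For statement (1) I would show that rigid-foldability forces a nonzero first-order flex. Suppose $(\boldsymbol{t},\lambda)$ is rigid-foldable. By Theorem \ref{thm for a paper} this means $(\boldsymbol{t},\lambda)$ is $0$-connected in the configuration space to a distinct state, and by the equivalence of the analytic and continuous formulations of rigid-foldability proved above I may take the connecting path to be a non-constant analytic arc $\gamma(s)=\boldsymbol{t}+\sum_{n\ge m}\boldsymbol{c}_n(s-s_1)^n$ lying in $\{\boldsymbol{t}\}_{P,C}$, where $\boldsymbol{c}_m\ne\boldsymbol{0}$ is its lowest nonvanishing Taylor coefficient and $m\ge 1$. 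Since $\boldsymbol{A}(\gamma(s))\equiv\boldsymbol{0}$, expanding in $s$ and reading off the lowest-order term yields $\mathrm{J}\boldsymbol{A}(\boldsymbol{t})\,\boldsymbol{c}_m=\boldsymbol{0}$, so $\boldsymbol{c}_m$ is a nonzero first-order flex and $\deg(\boldsymbol{t})\ge 1$. Hence $(\boldsymbol{t},\lambda)$ is not first-order rigid, which is the required contrapositive.

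For statement (2) I would run the converse at a regular point via the implicit function theorem. Assume $(\boldsymbol{t},\lambda)$ is regular and first-order rigid-foldable, so $\deg(\boldsymbol{t})=j-\mathrm{rank}(\mathrm{J}\boldsymbol{A})\ge 1$; combined with regularity, $\mathrm{rank}=\min(3i+6h,j)$, this forces $3i+6h<j$ and $\mathrm{rank}(\mathrm{J}\boldsymbol{A})=3i+6h$, i.e.\ $\boldsymbol{A}\colon\mathbb{R}^j\to\mathbb{R}^{3i+6h}$ is a submersion at $\boldsymbol{t}$. By the implicit function theorem the solution set of $\boldsymbol{A}=\boldsymbol{0}$ is, near $\boldsymbol{t}$, a smooth manifold of dimension $\deg(\boldsymbol{t})\ge 1$, so it contains a non-constant analytic arc through $\boldsymbol{t}$ realizing a prescribed first-order flex. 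This arc is a candidate rigid folding motion; lifting it to the configuration space through the continuity of $F$ and $F^{-1}$ (Proposition \ref{properties of F}) and invoking Theorem \ref{thm for a paper} shows $(\boldsymbol{t},\lambda)$ is rigid-foldable.

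The main obstacle is the unilateral boundary constraints encoded by $N_{P,C}$: the implicit function theorem only sees the equalities $\boldsymbol{A}=\boldsymbol{0}$ and produces an arc in $\widetilde{W}_{P,C}$, whereas a genuine folding motion must remain in $\{\boldsymbol{t}\}_{P,C}=\widetilde{W}_{P,C}\setminus N_{P,C}$ and admit an order function $\lambda$ continuous in the sense of Section 11.5 of \cite{demaine_geometric_2007}. If $\boldsymbol{t}\notin\overline{N_{P,C}}$ this is automatic, since $N_{P,C}$ is open and the arc stays outside it for small $s$; the delicate case is $\boldsymbol{t}\in\partial N_{P,C}$, a self-touching but non-crossing state, where I would use the openness of $N_{P,C}$ together with the reparametrization $s\mapsto-s$ of the arc to select the half that separates the panels rather than driving them into crossing, and then check that $\lambda$ extends continuously along that half. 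Verifying the feasibility of the flex direction against the boundary constraints, rather than the manifold argument itself, is where the real work lies.
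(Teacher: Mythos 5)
Your proposal is correct, and for statement (2) it is essentially the paper's own argument: regularity plus $\deg(\boldsymbol{t})\ge 1$ makes $\boldsymbol{A}$ a submersion at $\boldsymbol{t}$, and the Implicit Function Theorem gives a positive-dimensional solution manifold through $\boldsymbol{t}$, hence a path. For statement (1), however, you take a genuinely different route. The paper argues directly: $\deg(\boldsymbol{t})=0$ means $\mathrm{J}\boldsymbol{A}$ has full column rank $j$, so by the Implicit Function Theorem $\boldsymbol{A}^{-1}(\boldsymbol{0})$ is locally a $0$-dimensional manifold, i.e.\ $\boldsymbol{t}$ is isolated and the state is rigid. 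You instead prove the contrapositive by invoking the curve selection lemma (via the earlier equivalence of the analytic and continuous definitions) to replace a continuous folding motion by an analytic arc, and then read off the lowest nonvanishing Taylor coefficient as a nonzero first-order flex. Both are standard dual arguments; the paper's is shorter and self-contained, while yours buys robustness --- it does not need the local manifold structure of the solution set at a possibly singular point, only the semialgebraicity of $\boldsymbol{A}^{-1}(\boldsymbol{0})$ --- at the cost of depending on the equivalence theorem. Finally, your closing discussion of the unilateral boundary constraints $N_{P,C}$ and the continuity of the order function $\lambda$ addresses a point the paper's proof passes over entirely (it tacitly identifies $\{\boldsymbol{t}\}_{P,C}$ with $\boldsymbol{A}^{-1}(\boldsymbol{0})$ near $\boldsymbol{t}$); you are right that this is where the residual work lies, and your sketch of using the openness of $N_{P,C}$ and a choice of half-arc is the natural way to close it, though as you say it is only a sketch.
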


\begin{proof}
	Statement (1): If for a given point $\boldsymbol{t}$, $\boldsymbol{A}(\boldsymbol{t})=\boldsymbol{0}$ and $\deg (\boldsymbol{t})=0$, from the Implicit Function Theorem (Section 8.5, \cite{zorich_mathematical_2004}), $\boldsymbol{A}^{-1}(\boldsymbol{0})$ is a 0-dimensional manifold in some neighborhood of $\boldsymbol{t}$, hence $(\boldsymbol{t},\lambda)$ is rigid. 
	
	Statement (2): If $\boldsymbol{t}$ is a regular point, from the Implicit Function Theorem, the dimension of $\boldsymbol{A}^{-1}(\boldsymbol{0})$ in a neighborhood of $\boldsymbol{t}$ equals to $\deg (\boldsymbol{t})$. Therefore if $(\boldsymbol{t},\lambda)$ is regular, $(\boldsymbol{t},\lambda)$ is rigid if and only if $(\boldsymbol{t},\lambda)$ is first-order rigid.
\end{proof}

\begin{rem}
	For a smooth creased paper (the origami case), similarly we can define the foldability and rigidity in the topological or continuous sense. However, here the first-order rigidity does not imply rigidity, while analytical first-order rigidity implies analytical rigidity, which means that these definitions on foldability are not equivalent. For details please refer to Chapter 12, \cite{spivak_comprehensive_1999}.
\end{rem}

On the other hand, the tangent space of the transpose of Jacobian can be represented as $\mathrm{d}\boldsymbol{\sigma} \cdot \mathrm{J}\boldsymbol{A} =\boldsymbol{0}$, which implies the inner stress local to each vertex, and the inner stress and moments local to each hole. This is a parallel concept of ``self-stress'' for bar-joint frameworks.

Note that unlike a bar-joint framework, projective and affine transformation does not preserve the first-order rigidity or first-order rigid-foldability.

\subsection{Generic Rigid-foldability and Equivalent Panel-hinge Framework}

When studying the rigid-foldability of creased papers, we find that the some of them with a crease pattern isomorphic a particular graph might be (almost) always rigid, or might be (almost) always rigid-foldable. This is the combinatorial property of the crease pattern, which is called the generic rigidity (or rigid-foldability). Here we discuss the theory of generic rigidity, starting from a discussion of first-order rigidity. This is parallel to the work on the generic rigidity of bar-joint frameworks \cite{tay_chapter_1987}.

\begin{defn}
	Given a graph $H$, we define $(P,C)_H$ as a creased paper with a crease pattern isomorphic to $H$, which is also called a \textit{realization} of $H$. $\{(P,C)\}_H$ is the collection of such creased papers, and $\{\boldsymbol{t}\}_H$ is the collection of folding angles of such creased papers.
\end{defn}

\begin{thm} \label{thm: generic rigidity}
	\begin{enumerate}[(1)]
		\item Either (a) the set $X=\{\boldsymbol{t}|(P,C)_H$ is first-order rigid$\}$ is an open dense subset in $\{\boldsymbol{t}\}_H$, and (almost) all realizations of $H$ are first-order rigid; or (b) $X=\emptyset$ and all realizations of $H$ are first-order rigid-foldable.
		\item Either (a) the set $Y=\{\boldsymbol{t}|(P,C)_H$ is rigid$\}$ contains an open dense subset in $\{\boldsymbol{t}\}_H$, and (almost) all realizations of $H$ are rigid; or (b) $Y^c=\{\boldsymbol{t}|(P,C)_H$ is rigid-foldable$\}$ contains an open dense subset in $\{\boldsymbol{t}\}_H$, and (almost) all realizations of $H$ are rigid-foldable.
	\end{enumerate}	
\end{thm}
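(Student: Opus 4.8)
The plan is to reduce both statements to the behaviour of the minors of the Jacobian $\mathrm{J}\boldsymbol{A}$ introduced in the tangent-space analysis. After the substitution $t_j = \tan(\rho_j/2)$ the entries of $\boldsymbol{A}$ are polynomial in $\boldsymbol{t}$ (of degree $2$ in each $t_j$) with coefficients that are real-analytic in the design data—the sector angles $\boldsymbol{\alpha}$ and, for holes, the crease lengths—so every minor of $\mathrm{J}\boldsymbol{A}$ is a real-analytic function on the space of realizations of $H$. Since first-order rigidity of $(P,C)_H$ at $\boldsymbol{t}$ means exactly $\deg(\boldsymbol{t}) = 0$, i.e. $\mathrm{rank}(\mathrm{J}\boldsymbol{A}) = j$, the set $X$ is precisely the locus where at least one $j \times j$ minor of $\mathrm{J}\boldsymbol{A}$ is nonzero.

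For part (1) I would invoke the standard dichotomy for a real-analytic function on an irreducible parameter space: it is either identically zero or nonvanishing off a nowhere-dense analytic set. If every $j\times j$ minor vanishes identically on $\{\boldsymbol{t}\}_H$, then $\mathrm{rank}(\mathrm{J}\boldsymbol{A}) < j$ at every realization, so $X = \emptyset$ and every realization is first-order rigid-foldable, which is case (b). Otherwise some minor $m$ is not identically zero; then $\{m \neq 0\}$ is open and, being the complement of a proper analytic subset, dense, and it is contained in $X$. As $X$ is a finite union of such open loci it is open, and it is dense because it already contains $\{m \neq 0\}$; this is case (a). This mirrors the classical argument for bar-joint frameworks in \cite{tay_chapter_1987}, with $\mathrm{J}\boldsymbol{A}$ playing the role of the rigidity matrix.

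For part (2) I would not argue afresh but deduce it from part (1) together with Theorem \ref{thm: first-order}. First I record that the regular points form an open dense subset $R$ of $\{\boldsymbol{t}\}_H$: letting $r$ be the maximal rank of $\mathrm{J}\boldsymbol{A}$, the locus $\{\mathrm{rank} = r\}$ is cut out by the nonvanishing of some $r \times r$ minor, and the same dichotomy makes it open and dense. In case (1a), $X$ is open and dense, and first-order rigidity implies rigidity by Theorem \ref{thm: first-order}(1), so $X \subseteq Y$ and $Y$ contains an open dense set, giving case (2a). In case (1b) we have $X = \emptyset$, so on $R$ every realization is simultaneously regular and first-order rigid-foldable; Theorem \ref{thm: first-order}(2) then forces rigid-foldability there, so $R \subseteq Y^c$ and $Y^c$ contains an open dense set, giving case (2b).

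The main obstacle I anticipate is the density step, which requires $\{\boldsymbol{t}\}_H$ to be irreducible (or at least connected as a real-analytic variety) so that a not-identically-zero analytic function has nowhere-dense zero set. Because the folding angles are constrained by $\boldsymbol{A}(\boldsymbol{t}) = \boldsymbol{0}$, the set $\{\boldsymbol{t}\}_H$ is in general semi-analytic and may split into several components, so the clean dichotomy of part (1) could a priori hold only component by component. I would resolve this by parametrizing realizations through their design data over a connected open region and treating the folding angles as the fibre over that region, so that the minors are analytic on an irreducible ambient space; some care is also needed to pin down the meaning of ``almost all'' so that it matches ``open and dense'' rather than merely ``full measure''.
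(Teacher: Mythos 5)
Your proposal is correct and follows essentially the same route as the paper: both parts reduce to the observation that first-order rigidity is the nonvanishing of a minor of $\mathrm{J}\boldsymbol{A}$, which is a polynomial condition that is either identically zero or fails only on a nowhere-dense set, and part (2) is then deduced from part (1) via Theorem \ref{thm: first-order} together with the open density of the regular locus. Your closing caveat about the connectedness/irreducibility of $\{\boldsymbol{t}\}_H$ is a legitimate point that the paper's own proof (which simply cites \cite{bochnak_real_2013}) also leaves implicit, and your explicit identification of the relevant minors as the $j\times j$ ones is slightly more precise than the paper's phrasing, but the argument is the same.
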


\begin{proof}
	Statement (1): Assume that there is a first-order rigid realization $(\boldsymbol{t}, \lambda)$ for a given graph $H$, we know that $\deg(\boldsymbol{t})=0$, or equivalently, some minor matrix of $\mathrm{J}\boldsymbol{A}$ has a non-zero determinant. This is a polynomial constraint over $\boldsymbol{t}$. If such a polynomial is non-zero at $\boldsymbol{t}$, it is non-zero in an open dense subset of $\{\boldsymbol{t}\}_H$ (see Section 3 of \cite{bochnak_real_2013}).
	
	Statement (2): Assume that there is a first-order rigid realization $(\boldsymbol{t}, \lambda)$ for a given graph $H$, from Theorem \ref{thm: first-order}, $Y \supset X$, so the first part holds. Then assume there is no first-order rigid realization, $Y^c$ contains the set of all regular realizations of $H$, which is defined as $R=\{\boldsymbol{t}|$all sub-matrices of $\mathrm{J}\boldsymbol{A}$ has non-zero determinant$\}$. This is also a polynomial constraint over $\boldsymbol{t}$. If such a polynomial is non-zero at $\boldsymbol{t}$, it is non-zero in an open dense subset of $\{\boldsymbol{t}\}_H$.
\end{proof}

\begin{defn}
From Theorem \ref{thm: generic rigidity}, we provide several equivalent definitions on the \textit{generic rigidity} of $H$. If $H$ is not generically rigid, it is \textit{generically rigid-foldable}.
	\begin{enumerate} [(1)]
		\item $H$ is generically rigid if $X$ is dense in $\{\boldsymbol{t}\}_H$, or equivalently, (almost) all realizations of $H$ are first-order rigid.
		\item $H$ is generically rigid if there is a first-order rigid realization.
		\item $H$ is generically rigid if there is a first-order rigid regular realization.
	\end{enumerate}

A \textit{generic realization} of $H$ is a rigid realization if $H$ is generically rigid, or is a rigid-foldable realization if $H$ is generically rigid-foldable.  

\end{defn}

\begin{rem}
	For a bar-joint framework, "generic realization" usually means the coordinates of its vertices do not satisfy a system of polynomial equations with rational coefficients, which is a strong sufficient condition for genericity. However, in rigid origami we use folding angles to describe the position of a creased paper, and we haven't found a similar statement. How to exactly determine whether a realization is generic is still unclear in both classical rigidity theory and rigid origami. Sometimes estimating the generic rigidity from the number of constraints, such as saying "$H$ is generically rigid if $3i+6h \ge j$", will fail for a similar reason to the ``double banana'' model in bar-joint frameworks. Often, finding a non-generic rigid-foldable realization for a generically rigid crease pattern is an important topic, where tools such as symmetry might be useful. For example, the Miura-ori is a non-generic realization of a quadrilateral crease pattern, that is generically rigid.
\end{rem}
	
A creased paper $(P,C)$ can be regarded as a panel-hinge framework in rigidity theory, if each panel is substituted by a bar framework with complete graph. Here is a conclusion connecting the property of $H$ and the generic rigidity of $H$.

\begin{defn}
	Given a graph $H$, its dual graph $H^*$ is a graph that has a vertex for each face of $H$, and has an edge whenever two faces of $H$ are separated from each other by an edge. A multigraph $kH$ ($k \in \mathbb{Z}$) is defined as replacing each edge of $H$ by $k$ parallel edges. A \textit{spanning tree} of $H$ is a subset of $H$, which has all the vertices covered with minimum possible number of edges.
\end{defn}

\begin{thm}
	$H$ is generically rigid if and only if $5H^*$ contains 6 edge-disjoint spanning trees \cite{katoh_proof_2011}.
\end{thm}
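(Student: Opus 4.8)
The plan is to recognize the statement as a specialization of the body-hinge rigidity theorem and to import the resolution of the molecular conjecture, rather than to reprove that deep combinatorial result from scratch. First I would set up the dictionary between a creased paper and a body-hinge framework in $\mathbb{R}^3$. Viewing each panel as a rigid body and each crease as a hinge line shared by two panels, the incidence structure is exactly recorded by the dual graph $H^*$: its vertices are the panels (faces of $H$) and its edges are the creases. Thus a generic realization of $H$ is a generic \emph{panel-hinge} framework whose body-hinge multigraph is $H^*$, with the special feature that every hinge incident to a given panel lies in that panel's plane, so these hinge lines are coplanar.

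Next I would identify the paper's notion of first-order rigidity with the infinitesimal rigidity of this framework. A first-order flex of $(P,C)_H$ assigns a folding rate $\mathrm{d}t_j$ to each crease and must satisfy $\mathrm{J}\boldsymbol{A}\cdot\mathrm{d}\boldsymbol{t}=\boldsymbol{0}$, i.e.\ the per-vertex and per-hole consistency constraints of Definition \ref{def constraints}. These are precisely the cycle-closure conditions of the body-hinge rigidity matrix: each hinge contributes one relative screw generator and each independent cycle of $H^*$ contributes closure equations. Hence the rigidity matroid of $\mathrm{J}\boldsymbol{A}$ coincides with the body-hinge rigidity matroid on $H^*$, and by Theorem \ref{thm: generic rigidity} generic rigidity of $H$ is equivalent to generic first-order rigidity, i.e.\ to generic infinitesimal rigidity of the panel-hinge framework.

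The combinatorial target then follows from a degree-of-freedom count. In $\mathbb{R}^3$ a single rigid body has $\binom{4}{2}=6$ degrees of freedom; a hinge constrains the $6$-dimensional relative motion of two bodies down to the single relative rotation it permits, thereby removing $5$ constraints. This is why each edge of $H^*$ is taken with multiplicity $5$, producing $5H^*$, and why rigidity (pinning every body modulo the $6$-dimensional space of global isometries) is governed by a packing of $6$ trees. For generic body-hinge frameworks this is exactly Tay's theorem \cite{tay_chapter_1987}: the framework is generically rigid if and only if $5H^*$ contains $6$ edge-disjoint spanning trees. The ``only if'' direction is then the contrapositive matroid-rank statement: if $5H^*$ admits no such tree packing, the rank of $\mathrm{J}\boldsymbol{A}$ is deficient at a generic point, so a nontrivial first-order flex survives and the realization, being regular, is rigid-foldable by Theorem \ref{thm: first-order}.

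The hard part, and the reason a direct appeal to Tay's theorem does not suffice, is the coplanarity of the hinges at each panel. This forces every realization of $H$ into a measure-zero, non-generic locus of the abstract body-hinge configuration space, so one must show that the rank of the rigidity matrix on this special panel-hinge locus still attains the generic body-hinge value. Establishing that the panel-hinge and body-hinge rigidity matroids coincide is precisely the \emph{molecular conjecture}, whose proof by Katoh and Tanigawa \cite{katoh_proof_2011} proceeds by an inductive combinatorial construction together with a careful infinitesimal-rigidity analysis. I would invoke this result to close both directions; it is the single step I expect to be the genuine obstacle, and the one I would cite rather than reconstruct.
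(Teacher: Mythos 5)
Your proposal is correct and takes essentially the same approach as the paper: the theorem is stated there as a direct citation of Katoh and Tanigawa's resolution of the molecular conjecture, relying on exactly the reduction you describe (creased paper as panel-hinge framework, Tay's $5H^*$/$6$-tree count for generic body-hinge frameworks, and the coplanarity obstacle disposed of by the molecular conjecture). The only difference is that you make explicit the dictionary and the degree-of-freedom count that the paper leaves implicit in its remark that a creased paper can be regarded as a panel-hinge framework.
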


%\begin{prop}
%	Some examples on generic rigidity.
%	\begin{enumerate} [(1)]
%		\item A graph whose interior is a tree is generically rigid-foldable.
%		\item A quadrilateral mesh where any row or column of inner creases does not form a cycle is generically rigid.
%		\item \cite{fogelsanger_generic_1988} The graph of any closed triangulated 2-surface without boundary is generically rigid. 
%	\end{enumerate}
%\end{prop}

As stated in Definition \ref{defn: global}, a creased paper $(P,C)$ is \textit{globally rigid} if the configuration space is $(\boldsymbol{0},\emptyset)$ or a collection of two points $(\boldsymbol{\rho},\lambda)$ and $(-\boldsymbol{\rho},-\lambda)$. Global rigidity is a very important concept in classic rigidity theory, but there is no complete result on the generic global rigidity for a panel-hinge framework. It would be very appealing to develop a parallel theory that connects the property of $H$ and the generic global rigidity of $H$.

\subsection{High-order Rigid-foldability}

We want the concept of high-order rigid-foldability to be a generalization of the first-order rigid-foldability. It is natural to consider the Taylor expansion on each equation of $\boldsymbol{A}(\boldsymbol{t})=\boldsymbol{0}$, and the $n$-th order flex should then satisfy the equation given by the first $n$ terms of Taylor expansion. The problem is, as stated before, the degree for each folding angle in $\boldsymbol{A}(\boldsymbol{t})=\boldsymbol{0}$ is 2, so only the concept of first-order flex is sensible if we define high-order rigidity in this way.

Therefore we use the folding angle expression to describe high-order rigidity. The $n$-th order flex should satisfy the equation given by the first $n$ terms of Taylor expansion of $\boldsymbol{A}(\boldsymbol{\rho})=\boldsymbol{0}$ near a point in the configuration space. If the solution space of the $n$-th order flex is $\boldsymbol{0}$, we say the creased paper is \textit{$n$-th order rigid}, otherwise \textit{$n$-th order rigid-foldable}. \cite{demaine_zero-area_2016} gives the explicit constraints of the first-order and second-order flex. 

%\begin{prop}
%However, we can also define the high-order rigidity from the way similar to classical rigidity theory. Around a point in the configuration space $\boldsymbol{t}_0$, if we write $\boldsymbol{A}(\boldsymbol{t})=\boldsymbol{0}$ in the form of:
%
%\begin{equation} 
%t_i(s)=\sum_{n=0}^{\infty}a_{in}(s-t_{0i})^n
%\end{equation}
%
%The coefficient before each order of $(s-t_{0i})$ should be zero, which is exactly the constraints of the $n$-th order flex. 

	%For rigid origami, sufficiently high-order rigid-foldability is equivalent to %rigid-foldability.
%\end{prop}

\subsection{Duality and Isomorphism of the Tangent Space, Figure Method}

From the constraints on the first-order and second-order flex, \cite{demaine_zero-area_2016} also gives the following conclusions on a developable creased paper, which includes the discussion on reciprocal figure of the crease pattern. We will introduce related definitions first.

\begin{defn}
	Given a planar crease pattern $C$, the \textit{dual graph} $C^*$ is a planar graph whose faces, edges, and vertices correspond to the vertices, edges and faces of $C$, respectively. The \textit{reciprocal figure} $R$ of $C$ is a mapping (with potential self-intersection) of $C^*$ on to the plane with the following properties: 	
	\begin{enumerate} [(1)]
		\item The edge $c^*$ in $R$ mapped from a crease $c$ in $C$ is a line segment perpendicular to $c$.
		\item The face loop is defined for each face $v^*$ in $R$ corresponding to vertex $v$ in $C$ as the sequence of dual edges corresponding to the edges in counterclockwise order around $v$.
		\item Each edge $c^*$ of the reciprocal diagram has assigned a sign $\sigma_i$, such that along any face loop of $v^*$, the direction of the edge $c^*$ is $90^{\circ}$ clockwise or counterclockwise rotation of the vector along the original crease $c$ from corresponding vertex $v$ if the sign is plus or minus, respectively.
	\end{enumerate}
	A \textit{zero-area} reciprocal diagram of $C$ is a reciprocal diagram of $C$ where the signed area of each face in the counterclockwise orientation is zero. The example figures are shown in \cite{demaine_zero-area_2016}. 
\end{defn}

\begin{thm}
	Main results in \cite{demaine_zero-area_2016}.
	\begin{enumerate} [(1)]
		\item A developable creased paper $(P,C)$ is first-order rigid-foldable if and only if there exists a non-trivial (not a point) reciprocal figure of $C$.
		\item A developable creased paper $(P,C)$ is second-order rigid-foldable if and only if there exists a non-trivial zero-area reciprocal figure of $C$.
		\item For a developable single-vertex creased paper, the second-order rigid-foldability is equivalent to rigid-foldability.
		\item For a developable and flat-foldable quadrilateral creased paper \cite{tachi_generalization_2009}, the second-order rigid-foldability is equivalent to rigid-foldability.
	\end{enumerate}	
\end{thm}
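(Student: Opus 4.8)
The plan is to set up a Maxwell--Cremona type dictionary between first-order (and second-order) flexes of the folding angles at the flat state and the geometry of the reciprocal figure, and then treat the four statements as successive refinements of this dictionary. Since the paper is developable, the flat unfolded state $\boldsymbol{\rho}=\boldsymbol{0}$ is a genuine rigidly folded state, so I can linearize the consistency constraints of Definition \ref{def constraints} about it. At a single inner vertex of degree $n$, expanding $T_n(\boldsymbol{\rho})=I$ to first order in $\boldsymbol{\rho}$ and using that the flat-state sector rotations compose to the identity (developability), the infinitesimal rotation of the last panel relative to the first is $\sum_j \dot\rho_j [\boldsymbol{e}_j]_{\times}$, where $\boldsymbol{e}_j$ is the in-plane unit vector along crease $c_j$. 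Consistency forces this to vanish, which reduces to the single planar vector equation $\sum_j \dot\rho_j \boldsymbol{e}_j = \boldsymbol{0}$.

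First I would prove statement (1). Using the reciprocal figure's defining properties, each crease $c_j$ is sent to a segment $c_j^*$ perpendicular to $\boldsymbol{e}_j$, i.e. along $\boldsymbol{e}_j^{\perp}$, and assigning that segment the signed length $\dot\rho_j$ (with the sign $\sigma_j$ recording orientation) turns the face-loop closure condition around $v^*$ into $\sum_j \dot\rho_j \boldsymbol{e}_j^{\perp} = \boldsymbol{0}$, which is a $90^{\circ}$ rotation of the first-order flex equation above. A crease shared by two vertices contributes the same $\dot\rho_j$ to both face loops, so the per-vertex closures assemble into a globally consistent reciprocal figure exactly when $\{\dot\rho_j\}$ is a global first-order flex. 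Hence a non-trivial first-order flex (not identically zero) corresponds to a non-trivial (not a point) reciprocal figure, giving the equivalence. For statement (2) I would carry the expansion of $T_n(\boldsymbol{\rho})=I$ to second order along a first-order flex; the part linear in the second-order rates $\ddot\rho_j$ reproduces the same operator $\ddot\rho \mapsto \sum_j \ddot\rho_j \boldsymbol{e}_j$, so a second-order flex exists iff the quadratic obstruction built from the $\dot\rho_j \dot\rho_k$ cross terms lies in its image. The key computation is to identify this obstruction, vertex by vertex, with the signed area enclosed by the corresponding face loop of the reciprocal figure; the condition that every such area vanish is precisely the zero-area condition, yielding (2).

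Statements (3) and (4) are of a different character, since they upgrade the purely second-order criterion to genuine rigid-foldability, and here I would use the special structure of the two cases. For a developable single-vertex creased paper the configuration space is that of a spherical polygonal linkage, and I would argue that a non-trivial second-order flex already guarantees the existence of nearby genuine folded states of $T_n(\boldsymbol{\rho})=I$ at $\boldsymbol{\rho}=\boldsymbol{0}$, after which the curve selection lemma invoked earlier supplies an analytic folding path; the converse direction is immediate. For the flat-foldable degree-4 vertex I would instead invoke the explicit one-parameter relation between opposite folding angles of such a vertex \cite{tachi_generalization_2009}, which exhibits a finite folding motion directly from any admissible second-order flex.

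The main obstacle is exactly the passage from second order to finite in statements (3) and (4): for a general developable creased paper second-order rigid-foldability does \emph{not} imply rigid-foldability (higher-order obstructions can appear), so the argument cannot be purely local and must exploit that single vertices and flat-foldable quadrilaterals have particularly tame solution varieties. Establishing rigorously that no third- or higher-order obstruction survives --- rather than merely exhibiting the expected folding path --- is the delicate step, and is where I would lean most heavily on the explicit linkage parametrizations and on the analytic curve selection machinery already used in the equivalence of the analytic, continuous and topological definitions of rigid-foldability.
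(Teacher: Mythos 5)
First, note that the paper does not actually prove this theorem: it is stated as a quotation of the main results of \cite{demaine_zero-area_2016}, so there is no internal proof to compare your attempt against. Judged on its own terms, your outline for statements (1) and (2) is a faithful reconstruction of the strategy of the cited work: at the flat state of a developable paper the per-vertex first-order condition does reduce to $\sum_j \dot\rho_j \boldsymbol{e}_j = \boldsymbol{0}$, rotating by $90^{\circ}$ turns each such relation into the closure condition of a face loop of the reciprocal figure with edge lengths $|\dot\rho_j|$ and signs $\sigma_j$, and the second-order obstruction along a first-order flex is, vertex by vertex, the signed area of the corresponding face of the reciprocal diagram. Two points you should make explicit rather than wave at: you need the planarity of the crease pattern to guarantee that the per-vertex closures glue into a single globally drawn reciprocal figure (a shared crease must receive the same dual segment from both of its endpoints, which is a compatibility condition, not automatic bookkeeping), and you should check that the trivial flex is exactly the one whose reciprocal figure degenerates to a point, which is what makes ``non-trivial'' match on both sides of the equivalence.

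The genuine gap is in statements (3) and (4). These are not corollaries of the reciprocal-figure dictionary: their entire content is that for these two special classes no obstruction beyond second order survives, and your proposal names this as ``the delicate step'' without carrying it out. For the single vertex, the curve selection lemma only tells you that a flat vertex which is a non-isolated point of the variety $T_n(\boldsymbol{\rho})=I$ admits an analytic folding path; what you must actually show is that the existence of a second-order flex forces the flat state to be non-isolated, and that requires the spherical-linkage analysis of the single-vertex configuration space, not the general machinery. Similarly for (4) you would need to verify that second-order compatibility at every vertex of a flat-foldable quadrilateral mesh already implies the global loop conditions around every interior face, which is where Tachi's one-parameter coupling of opposite folding angles does the real work; invoking that parametrization is the proof, not a garnish on it. As written, (3) and (4) are asserted rather than proved, so the proposal establishes at most half of the theorem.
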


\begin{rem}
	Statements (3) and (4) imply a very interesting topic, which is to find the equivalence of the second-order rigid-foldability (or possibly higher-order) to rigid-foldability for some special creased papers. We believe that there must be some deeper reason to justify the equivalence of high-order rigid-foldability and rigid-foldability for a certain type of creased paper, possibly from real algebraic geometry, and we will try to discuss it in a future article.
\end{rem}

Apart from the analyses above, we can also find some isomorphism on the tangent space $T_{\boldsymbol{\rho}}$ for a developable creased paper. From the constraints on the first-order flex, $T_{\boldsymbol{\rho}}$ is isomorphic to the space of the magnitude of admissible axial forces, when regarding the inner creases as rigid bars, and the holes as rigid panels. It means that $\mathrm{d} \boldsymbol{\rho} \in T_{\boldsymbol{\rho}}$ if and only if the above model is in equilibrium when seeing $\mathrm{d} \boldsymbol{\rho}$ as the magnitude of corresponding axial forces \cite{tachi_design_2012}.

\begin{prop}
	A developable creased paper is first-order rigid-foldable if the degree of each inner vertex is no less than 4.
\end{prop}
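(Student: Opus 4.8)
The plan is to reduce the statement to a linear counting argument built on two facts already established for developable creased papers: statement~(1) of the reciprocal-figure theorem, and the isomorphism between the tangent space $T_{\boldsymbol{\rho}}$ and the space of admissible axial forces. By the latter, at the flat (developable) reference state a first-order flex $\mathrm{d}\boldsymbol{\rho}$ is precisely a self-equilibrated system of axial forces $w_c=\mathrm{d}\rho_c$ carried by the inner creases, viewed as in-plane bars meeting at the inner vertices. Hence \emph{first-order rigid-foldable} is equivalent to the existence of a non-trivial such self-stress (equivalently, by statement~(1), a non-trivial reciprocal figure), and it suffices to exhibit one whenever every inner vertex has degree at least $4$.

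First I would set up the linear system. Let $V_i$ be the number of inner vertices and $E_i$ the number of inner creases. Because the paper is developable, at the flat state every crease incident to an inner vertex $v$ lies in one plane, so the equilibrium condition $\sum_{c\ni v} w_c\,\hat{\boldsymbol{c}}=\boldsymbol{0}$ is a planar, two-scalar equation (its out-of-plane component vanishes identically). Assembling these over all inner vertices yields a linear map of size $2V_i\times E_i$ in the unknowns $\{w_c\}$ whose kernel is $T_{\boldsymbol{\rho}}$, and a non-trivial kernel is guaranteed as soon as $E_i>2V_i$. To reach this I would invoke the handshake identity $\sum_{v\ \mathrm{inner}}\deg(v)=2E_{ii}+E_{io}$, where $E_{ii}$ counts inner creases joining two inner vertices and $E_{io}$ those joining an inner to a boundary vertex. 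The hypothesis $\deg(v)\ge 4$ gives $4V_i\le 2E_{ii}+E_{io}\le 2E_i$, hence $E_i\ge 2V_i$; note that degree $3$ would only yield $E_i\ge\tfrac{3}{2}V_i$, which is exactly why the threshold must be $4$.

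The main obstacle is upgrading $E_i\ge 2V_i$ to the strict inequality $E_i>2V_i$, since equality would leave the count inconclusive. Equality forces $E_{io}=0$ and $E_{oo}=0$ (no inner crease joining two boundary vertices) with every inner degree exactly $4$; in particular no inner crease would touch the boundary. I would rule this out by a short combinatorial lemma: in a connected developable disk with at least one inner vertex, some inner crease is incident to the boundary. The argument is to pick a panel $Q$ having an edge on $\partial P$; if every corner of $Q$ is a boundary vertex then each crease $Q$ shares with a neighbour joins two boundary vertices and is of type $E_{oo}$, forcing $Q=P$ and contradicting the presence of an inner vertex, whereas if $Q$ has an inner corner then walking around $\partial Q$ produces a crease of type $E_{io}$. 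Either way at least one inner crease meets the boundary, i.e.\ $E_{io}+E_{oo}\ge 1$, so the handshake sharpens to $E_i\ge 2V_i+\tfrac{1}{2}E_{io}+E_{oo}>2V_i$, the kernel is non-trivial, and the paper is first-order rigid-foldable.

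Finally I would record the scope. The argument as written assumes a single boundary component and no holes (Euler characteristic $1$); for a paper with $h$ holes one must adjoin the hole consistency constraints, which contribute extra equilibrium and moment equations and modify the count, and I expect this bookkeeping rather than any new idea to be the residual work. The degenerate case of no inner vertices (where the hypothesis is vacuous) should be handled separately: there the folding angles on any inner creases are unconstrained, so the paper is trivially first-order rigid-foldable unless it carries no inner creases at all.
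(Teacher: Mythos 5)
Your proposal is correct and follows essentially the same route as the paper: both rest on the isomorphism between first-order flexes and self-equilibrated axial forces carried by the inner creases, giving two scalar equilibrium equations per inner vertex against $j$ unknowns, with the handshake count plus the boundary effect yielding $j>2i$ and hence a nontrivial kernel. The only divergences are peripheral: you make the strict-inequality step explicit via a small combinatorial lemma where the paper simply asserts that the boundary forces $j>2i$, and for the case with holes the paper uses a one-line ``cut from a hole-free paper'' argument rather than the extra constraint bookkeeping you defer.
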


\begin{proof}
	We start from assuming that there is no hole in this developable creased paper (Euler Characteristic 1), which has $i$ inner vertices and $j$ inner creases. From the isomorphism mentioned above, for this creased paper $\deg (\boldsymbol{0})=j-2i$. If the degree of each inner vertex is 4 and the creased paper is unbounded, $j=2i$. However, consider the effect of boundary, each inner vertex share more inner creases, so $j>2i$, and if there are some vertices whose degree is greater than 4, $j$ will be even greater, therefore $\deg (\boldsymbol{0})=j-2i>0$. For a developable creased paper with holes, it can be generated by cutting from a developable creased paper without hole, hence it is still first-order rigid-foldable. 
\end{proof}

\cite{schief_integrability_2008} gives another way to describe the constraints on the first-order flex by the existence of dual graph. If we assume a point $\boldsymbol{\rho}$ is differentiable with respect to a parameter $t$ (only need the form of derivative) in $\{\boldsymbol{\rho}\}_{P,C}$, then for every $K$, 
\begin{equation}
\dot{\rho_K} \overrightarrow{c_K}= \overrightarrow{\omega_{K}}-\overrightarrow{\omega_{K-1}}
\end{equation}
where $\overrightarrow {c_K}$ is the direction vector of crease $c_K$ where the folding angle is $\rho_K$, and $\omega_{K}$ is the angular velocity of panel $K$ in the global coordinate system. From Proposition \ref{prop explicit def of f} we obtain:
\begin{equation}
\overrightarrow{\omega_K}(\boldsymbol{\rho})_\times = \dot T_K(\boldsymbol{\rho})T_K^T(\boldsymbol{\rho})
\end{equation}
where:
\begin{align*} 
\overrightarrow{\omega_K}(\boldsymbol{\rho})_\times=
\left[ \begin{array}{ccc}
0               & -\omega_z         & \omega_y \\
\omega_z        &  0                & -\omega_x  \\
-\omega_y       & \omega_x          & 0
\end{array} \right]
\end{align*}
$\omega_x, \omega_y, \omega_z$ are the coordinates of $\overrightarrow{\omega_K}(\boldsymbol{\rho})$, and
\begin{align*} 
T_K(\boldsymbol{\rho})=\prod_{1}^{K}
\left[ \begin{array}{ccc}
\cos \beta_{k} & -\sin \beta_{k} & 0\\
\sin \beta_{k} &  \cos \beta_{k} & 0\\
0                &                 0 & 1
\end{array} \right]
\left[ \begin{array}{ccc}
1                &                 0 & 0             \\
0                &     \cos \rho_{k} & -\sin \rho_{k}\\
0                &     \sin \rho_{k} &  \cos \rho_{k}
\end{array} \right]
\end{align*}

By associating each panel with the instantaneous rotation axis, we get a dual graph $C^*$ of $C$, which is formed by joining corresponding ends of the instantaneous rotation axes. $\dot{\boldsymbol{\rho}}$ is admissible if and only if $C^*$ is parallel to $C$.

\subsection{When the Paper is a Polyhedron}

If the paper is a polyhedron, many conclusions from other related topics can be filled into the framework of rigid origami. One classical topic is unfolding a given polyhedron with possible cuts along the edges, and the main problem of it is to avoid self-intersection, which is discussed in Section \ref{subsection: self-intersection}. Another topic is to consider the possible rigid folding motion of a polyhedron without cutting. Based on the well-known Cauthy's Theorem, a convex polyhedron (or a doubly-covered planar polyhedron) is first-order rigid, while a concave polyhedron might be rigid-foldable. Robert Connelly gave an example of concave rigid-foldable polyhedron \cite{connelly_counterexample_1977}. The Bellow's Conjecture, saying that the volume of a rigid-foldable polyhedron is invariant under rigid folding motion, was proved and became a typical feature of such rigid-foldable polyhedron \cite{connelly_bellows_1997}. Although studying the isometry of a polyhedron is a historical problem, the research concerning the rigid folding motion between possible isometry is relatively new and there are many open problems.

\section{Geometric Analysis of the Configuration Space}

Apart from the algebraic analysis above, this problem can also be viewed from geometry. We can regard a rigidly folded state as piecewise rotation of panels along creases, and a rigid folding motion as a piecewise continuous rotation between two rigidly folded states. The consistency and boundary constraints of the configuration space are the geometric and physical compatibility of the rigid panels. 

\subsection{Connection with Spherical Linkage Folding, Spherical Duality}

Spherical linkage folding has proved useful in modelling a single-vertex creased paper, if we put the center vertex in the center of a sufficiently small sphere. Then all the sector angles correspond to a closed series of great spherical arcs (consistency constraints) that only intersect at the endpoints of all the arcs (boundary constraints), and every folding angle is the supplement of an interior angle of this spherical polygon. Possible rigid folding motion of a single-vertex creased paper can also be descibed from triangulating this spherical linkage. We provide some basic analysis for a degree 1, 2 or 3 single creased paper in Appendix \ref{app: 1,2,3}. 

We then consider the rigid-foldability of a degree-$n$ single-vertex creased paper. In planar linkage folding, the \textit{Carpenter's rule problem} is to ask whether a simple planar polygon can be moved continuously to a position where all its vertices are in convex position, the edge lengths are preserved and there is no self-intersection along the way \cite{connelly_straightening_2003}. We can see that the rigid-foldability of a single-vertex creased paper is closely linked with the spherical version of the Carpenter's rule problem.

\begin{prop}
	Some conclusions about the folding angle space of a degree-$n$ single-vertex creased paper $\{\boldsymbol{\rho}\}_v$ from spherical linkage folding.
	
	\begin{enumerate} [(1)]
		\item If a pair of inner creases is collinear, $\sum \alpha_l \ge 2\pi$. We denote the corresponding folding angles by $\rho_i, \rho_j$, then $\{\boldsymbol{\rho}\}_{v}$ has a two dimensional subspace $\rho_i = \rho_j$. 
		\item If a pair of inner creases is collinear, and $\sum \alpha_l = 2\pi$, we denote the corresponding folding angles by $\rho_i, \rho_j$, then $\{\boldsymbol{\rho}\}_{v}$ has a two dimensional subspace $\rho_i = \rho_j$, other $\rho_k=0$. All these subspaces only intersect at $\boldsymbol{0}$.
		\item If $\sum \alpha_l \in (0, 2\pi)$, then every $\alpha_l \in (0, \pi)$, and there is no pair of collinear inner creases (antipodal points). $\{\boldsymbol{\rho}\}_{v}$ is $\{\boldsymbol{\rho}_0,-\boldsymbol{\rho}_0\}$ or the union of two disjoint 0-connected subspaces, which are symmetric to $\boldsymbol{0}$ \cite{streinu_single-vertex_2004}. Note that even if $n \ge 4$, the configuration space can be two isolated points, such as $\alpha_1=\alpha_2+\alpha_3+\alpha_4$.
		\item If $\sum \alpha_l = 2\pi$ and every $\alpha_l \in (0, \pi)$, $\{\boldsymbol{\rho}\}_{v}$ is $\{\boldsymbol{0}\}$ or the union of two 0-connected subspaces \cite{streinu_single-vertex_2004}, which are symmetric to $\boldsymbol{0}$ and only intersect at $\boldsymbol{0}$. If and only if the degree of center vertex $n \ge 4$ and the interior of crease pattern is not a cross, $\{\boldsymbol{\rho}\}_{v}$ has other non-trivial 0-connected subspaces different from (2), where every folding angle can be non-zero \cite{abel_rigid_2016}.
		\item Otherwise, from the constraints of a closed spherical linkage, $\sum \alpha_l \in (2\pi, (2n-2)\pi)$. There is no further general result on the shape and 0-connectedness of the configuration space. 
	\end{enumerate}
\end{prop}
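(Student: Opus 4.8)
The plan is to run every part through the spherical-linkage model described just before the statement. Placing the center vertex $v$ at the centre of a small sphere turns the link of $v$ into a closed spherical polygon $\Gamma$ whose $n$ edges have the prescribed arc-lengths $\alpha_l$, whose interior angles are $\pi-\rho_l$, and whose closure condition is precisely the consistency constraint $T_n(\boldsymbol{\rho})=I$ of Definition~\ref{def constraints}. Thus $\{\boldsymbol{\rho}\}_v$ is identified with the moduli space of closed spherical $n$-gons of fixed edge-lengths $\boldsymbol{\alpha}$, modulo the rotation group of the sphere, with the non-self-intersection boundary constraints imposed. After this translation each claim becomes a statement in spherical distance geometry, and I would use the symmetry $\boldsymbol{\rho}\mapsto-\boldsymbol{\rho}$ from Proposition~\ref{properties of F} to obtain the stated symmetry of the components for free.

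For (1) and (2) I read ``a pair of collinear creases'' as a pair $c_i,c_j$ that can be brought to antipodal positions $V_i=-V_j$ on $\Gamma$. Both sub-arcs of $\Gamma$ joining $V_i$ to $V_j$ are then paths between antipodes, so each has spherical length at least the geodesic distance $\pi$; summing gives $\sum\alpha_l\ge 2\pi$, which is the first assertion. To produce the distinguished family I would fix the antipodal pair on a polar axis $A$ and let each of the two sub-chains move: rotating a sub-chain rigidly about $A$ preserves all edge-lengths and fixes the poles, and where a sub-chain still carries internal freedom it may also flex; these motions keep the interior angles of $\Gamma$ constant along each sub-chain and, by the reflection symmetry across $A$, force the two dihedral angles at the poles to stay equal, i.e.\ $\rho_i=\rho_j$. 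Confirming that this locus has exactly the claimed dimension is the delicate bookkeeping step: I would compute the nullity of $\mathrm{J}\boldsymbol{A}$ at the antipodal configuration (a \emph{special}, rank-deficient point), so that $\deg(\boldsymbol{\rho})$ there exceeds the generic value $n-3$, and check that the $\rho_i=\rho_j$ slice accounts for it. For (2) the extra hypothesis $\sum\alpha_l=2\pi$ forces each sub-arc to have length exactly $\pi$, hence to be a great-circle semicircle; every intermediate vertex then lies on a geodesic with interior angle $\pi$, so all remaining $\rho_k$ vanish, giving the book-fold, and two distinct collinear pairs give subspaces meeting only at $\boldsymbol{0}$ because a nonzero fold along one diameter destroys the collinearity of the other.

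For (3) and (4) I would first record the elementary consequences of the perimeter. If $\sum\alpha_l<2\pi$ then no sub-arc can reach length $\pi$, so there is no antipodal (collinear) pair and, a fortiori, every $\alpha_l<\pi$; moreover $\Gamma$ is confined to an open hemisphere. At this point the problem is exactly the spherical Carpenter's-rule question \cite{connelly_straightening_2003}, and I would invoke the single-vertex results of \cite{streinu_single-vertex_2004}: each sign-component of the hemispherical linkage is convexifiable and hence $0$-connected, giving either the two antipodal points $\{\boldsymbol{\rho}_0,-\boldsymbol{\rho}_0\}$ (the rigid case, illustrated by $\alpha_1=\alpha_2+\alpha_3+\alpha_4$) or two mirror $0$-connected components. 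For (4), the developable case $\sum\alpha_l=2\pi$ with every $\alpha_l<\pi$, I would again cite \cite{streinu_single-vertex_2004} for the two mirror components meeting only at $\boldsymbol{0}$, and \cite{abel_rigid_2016} for the sharp criterion: the only configuration whose motions are exhausted by the book-folds of (2) is the degree-$4$ cross (both pairs of opposite creases collinear), while in every other case with $n\ge 4$ there is an additional $0$-connected branch on which all folding angles can be nonzero. Verifying that the cross is exactly the exceptional realization, in both directions of the iff, is where the classical single-vertex folding analysis does the real work.

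Finally, for (5) I would note that the remaining case is $\sum\alpha_l>2\pi$ with no collinear pair, and that the existence of a closed spherical $n$-linkage imposes the polygon inequalities bounding the perimeter into $(2\pi,(2n-2)\pi)$; beyond this the shape and $0$-connectedness are genuinely open. The main obstacle throughout is that parts (3) and (4) do not admit a self-contained proof: they rest on the convexification theorems for spherical linkages, so the real content of my argument there is the careful reduction---hemisphere confinement, acute arcs, and matching of the non-self-intersection constraints---that lets those theorems apply. The second hard point, as flagged above, is the dimension count for the collinear family in (1), since the antipodal configuration is a singular point of $\boldsymbol{A}$ where the naive count $n-3$ no longer controls the local geometry.
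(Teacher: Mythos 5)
Your proposal is correct and follows essentially the same route as the paper: the paper states this proposition without giving a proof, relying on the spherical-linkage translation set up in the preceding paragraph (sector angles as arc-lengths, folding angles as supplements of interior angles) and deferring the substantive claims in (3) and (4) to \cite{streinu_single-vertex_2004} and \cite{abel_rigid_2016}, exactly as you do. Your elementary arguments for (1), (2) and (5) (antipodal sub-arcs forcing $\sum\alpha_l\ge 2\pi$, the book-fold family, the perimeter bounds) supply detail the paper omits but are consistent with its intended reduction.
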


For a general creased paper, we can model it on a sphere with the \textit{Gauss map}. Translating the startpoint of the normal vectors of all panels to the center of a unit sphere, and connecting the corresponding endpoints on this sphere if they share an inner crease. This operation forms a spherical dual of a creased paper: each panel is mapped to a point, and each inner crease (or each folding angle) is mapped to a linkage. The spherical duality would be a potential tool for analyzing a large creased paper.

\subsection{When the Interior of Crease Pattern is a Forest} \label{subsection: forest}

As stated in the introduction, the positive problem in rigid origami can now be characterized as to find the conditions on sector angles for a creased paper to be rigid-foldable. It is relatively complex because the rigid-foldability of a creased paper cannot be represented by the rigid-foldability of its single creased papers. However, if we consider the case where a creased paper has no inner panel, this relatively simple structure may generate rigid-foldability.

\begin{figure}[!tb]
	\noindent \begin{centering}
		\includegraphics[width=1\linewidth]{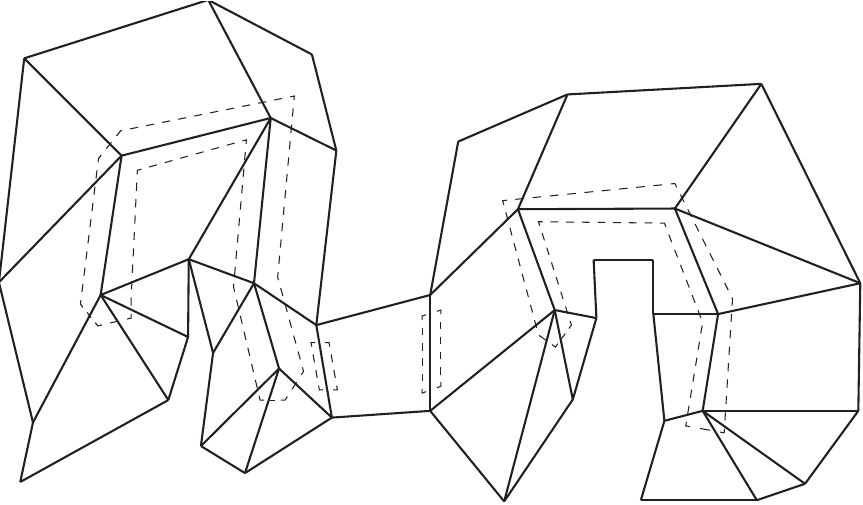}
		\par\end{centering}
	
	\caption{\label{fig: tree structure}We show a rigid-foldable creased paper with the interior of crease pattern being a forest. Here we denote each tree by dashed lines.}
\end{figure}

\begin{defn}
	In graph theory, a \textit{tree} is an undirected graph in which any two vertices are connected by exactly one path. A \textit{forest} is a disjoint union of trees.
\end{defn}

\begin{thm} \label{no cycle}
	If a creased paper $(P,C)$ satisfies:
	\begin{enumerate} [(1)]
		\item The interior of $C$ is a forest.
		\item The restriction of a rigidly folded state $(\boldsymbol{\rho},\lambda)$ on each single creased paper is rigid-foldable.    
	\end{enumerate} 
	then this rigidly folded state $(\boldsymbol{\rho},\lambda)$ is generically rigid-foldable. Especially, if $\boldsymbol{\rho}=\boldsymbol{0}$, the configuration $(\boldsymbol{\rho},\lambda)=(\boldsymbol{0},\emptyset)$ is rigid-foldable (see Figure \ref{fig: tree structure}).
\end{thm}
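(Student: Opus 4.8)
The plan is to reduce global rigid-foldability to a degree-of-freedom count that is controlled by the acyclic (forest) structure, and then to promote the resulting first-order rigid-foldability to genuine rigid-foldability. First I would invoke Theorem \ref{thm: first-order}(1): since each single creased paper is rigid-foldable at $(\boldsymbol{\rho},\lambda)$, each is first-order rigid-foldable, so the local folding-angle space at every inner vertex $v$ has positive local degree of freedom $d_v = n_v - \mathrm{rank}(\mathrm{J}\boldsymbol{A}_v) \ge 1$, where $n_v$ is the degree of $v$ and $\mathrm{J}\boldsymbol{A}_v$ is the $3\times n_v$ Jacobian of the single-vertex consistency constraint $T_{n_v}(\boldsymbol{\rho})=I$. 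Because the interior of $C$ is a forest, any hole would have to be encircled by a cycle of inner creases, so no single-hole creased paper occurs and every consistency constraint comes from an inner vertex; the global map $\boldsymbol{A}$ is then the vertical stack of the blocks $\mathrm{J}\boldsymbol{A}_v$, each supported on the creases incident to $v$.

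The core step is the counting inequality. Writing $j_{ii}$ for the number of inner creases joining two inner vertices (that is, the edges of the forest) and noting $\sum_v n_v = 2j_{ii}+(j-j_{ii})$, hence $j=\sum_v n_v - j_{ii}$, subadditivity of rank for stacked matrices gives
\[
\deg(\boldsymbol{\rho}) = j-\mathrm{rank}(\mathrm{J}\boldsymbol{A}) \;\ge\; j-\sum_v \mathrm{rank}(\mathrm{J}\boldsymbol{A}_v) \;=\; \sum_v d_v - j_{ii} \;\ge\; i-(i-c) \;=\; c \;\ge\; 1,
\]
where $i$ is the number of inner vertices, $c$ the number of trees in the forest, and I have used $d_v\ge 1$ together with the forest edge count $j_{ii}=i-c$. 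This is exactly where acyclicity is indispensable: for a crease pattern containing a cycle one has $j_{ii}\ge i$ and the bound collapses. The inequality shows $(\boldsymbol{\rho},\lambda)$ is first-order rigid-foldable, and since $\deg(\boldsymbol{\rho})\ge 1$ is an open condition it persists under perturbation; at a regular (hence generic) realization, Theorem \ref{thm: first-order}(2) then upgrades first-order rigid-foldability to rigid-foldability, which is the asserted generic rigid-foldability.

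The remaining, and hardest, point is the flat state $\boldsymbol{\rho}=\boldsymbol{0}$, which is typically a singular (non-regular) point of the configuration space, so Theorem \ref{thm: first-order}(2) does not apply directly. Here I would argue by explicit construction rather than through the Jacobian: root each tree of the forest and build a one-parameter motion $\boldsymbol{\rho}(s)$ with $\boldsymbol{\rho}(0)=\boldsymbol{0}$ by propagating outward, using at the root the actual single-vertex folding motion supplied by hypothesis (2) and, at each child joined to its parent by a crease $c$, extending the motion so that $\rho_c$ tracks the value already fixed by the parent. The main obstacle is \emph{drivability} across a shared crease: one must verify that a child vertex admits a folding in which the single shared coordinate $\rho_c$ follows a prescribed path, and reconcile the degenerate cases where $c$ is forced to remain at $0$ on one side. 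The forest hypothesis is precisely what makes this tractable, since the tree can be consistently oriented from root to leaves so that no shared crease is ever constrained simultaneously by two already-folded branches; the local motions therefore glue without a closed-loop compatibility condition, yielding a genuine rigid folding motion from the flat state.
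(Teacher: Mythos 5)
Your first\hyphen order count is a genuinely different route from the paper's (the paper never touches the Jacobian here), and the inequality $\deg(\boldsymbol{\rho}) \ge \sum_v d_v - j_{ii} \ge c \ge 1$ is a correct and pleasant observation (modulo the side claim that a forest interior excludes holes, which is false: an annulus with radial inner creases has a forest interior yet still carries a degree-$n$ hole constraint, so the $6$-row hole blocks must be added to the stack). The genuine gap is the promotion from first-order rigid-foldability to rigid-foldability. Theorem \ref{thm: first-order}(2) requires the state to be \emph{regular}, meaning $\mathrm{rank}(\mathrm{J}\boldsymbol{A})=\min(3i+6h,j)$; whenever $3i+6h\ge j$ this forces $\deg=0$, so ``regular and first-order rigid-foldable'' is unsatisfiable and the theorem gives you nothing. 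That regime does occur under the hypotheses: a single degree-$3$ vertex with two collinear creases is rigid-foldable, has $j=3=3i$, and every point of its one-dimensional solution set is special. Even when $j>3i+6h$ you have not shown that the given state, or a generic realization satisfying hypotheses (1)--(2), is regular. First-order rigid-foldability alone does not imply rigid-foldability, and the ``generically'' in the statement is not Jacobian genericity: the paper's proof makes explicit that the non-generic failure is the one in Figure \ref{fig: special pattern}, where the ranges of the shared folding angle on the two local paths meet only in a single point.

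For $\boldsymbol{\rho}=\boldsymbol{0}$ your rooted-tree propagation is essentially the paper's strategy, but you stop exactly where the work is: you write that ``one must verify'' that a child vertex can track the prescribed path of the shared angle $\rho_c$, and you do not verify it. The paper closes this step using the symmetry of each local configuration space about $\boldsymbol{0}$ (Proposition \ref{properties of F}(2)): the local path through $\boldsymbol{0}$ may be taken symmetric, so $\rho_c$ sweeps an interval $[-a_k,a_k]$ on each side, and $[-a_1,a_1]\cap[-a_2,a_2]=[-\min(a_1,a_2),\min(a_1,a_2)]$ is either a nondegenerate interval --- in which case both local paths are reparametrized by $\rho_c$ and glued --- or one of the $a_k$ is $0$, in which case that side is carried along rigidly. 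Without this, nothing in your argument prevents the child from being able to move $\rho_c$ only into $[0,a_2]$ while the parent's motion requires $[-a_1,0]$, nor rules out the two ranges meeting only at the single point $0$; this is precisely the obstruction that makes the general-$\boldsymbol{\rho}$ statement only generic. You also silently assume the child's local motion can be parametrized by $\rho_c$, i.e.\ that the shared angle is not stationary along the child's path, which is the degenerate case ($a_k=0$) the paper must and does treat separately.
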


\begin{proof}
	Since the interior of $C$ is a forest, adjacent single creased papers only share one inner crease. Start from an arbitrary single creased paper, called $P^1$. Because $(\boldsymbol{\rho}^1,\lambda)$ is not an isolated point (here ``isolated'' means not 0-connected to any other points), there is a path between a point $(\boldsymbol{\overline{\rho}}^1,\lambda)$ and $(\boldsymbol{\rho}^1,\lambda)$. Consider an adjacent creased paper $P^2$ with a common folding angle $\rho_{c}$. We can write $\rho_{c} \in [a_1, b_1]$ on the path in $\{(\boldsymbol{\rho},\lambda)\}^1$, similarly, $\rho_{c} \in [a_2, b_2]$ on the path in $\{(\boldsymbol{\rho},\lambda)\}^2$. Let $\rho_{c} \in [a_1,b_1] \cap [a_2, b_2]$, which is not empty because $\boldsymbol{\rho}$ exists. If $[a_1,b_1] \cap [a_2, b_2]$ is a closed interval, we can re-parametrize the two paths in $\{(\boldsymbol{\rho},\lambda)\}^1$ and $\{(\boldsymbol{\rho},\lambda)\}^2$, the direct product of them is a path in $\{(\boldsymbol{\rho},\lambda)\}_{1 \cup 2}$, and now the restriction of this rigidly folded state on $P^1 \cup P^2$ is rigid-foldable. Otherwise, if $[a_1,b_1] \cap [a_2, b_2]$ is just a point, the path may not exist. The case where the restriction of $(\boldsymbol{\rho},\lambda)$ on $\{(\boldsymbol{\rho},\lambda)\}_{1 \cup 2}$ is an isolated point is what we mean by non-generic. An example of this non-generic case is shown in Figure \ref{fig: special pattern}.
	
	Now consider the special case when $\boldsymbol{\rho}=\boldsymbol{0}$. This is different to the general case because the range of all folding angles are symmetric to $0$. Because $(\boldsymbol{0}^1,\emptyset)$ is not an isolated point, there must be a path between a point $(\boldsymbol{\overline{\rho}}^1,\lambda)$ and $(\boldsymbol{0},\emptyset)$, and from the symmetry there must be a path between $(\boldsymbol{0},\emptyset)$ and $(-\boldsymbol{\overline{\rho}}^1,\lambda)$ in $\{(\boldsymbol{\rho},\lambda)\}^1$. Name its adjacent creased paper $P^2$ and the common folding angle $\rho_{c}$. $\rho_{c} \in [-a_1, a_1]$ ($a_1 \ge 0$) on the path in $\{(\boldsymbol{\rho},\lambda)\}^1$, similarly, $\rho_{c} \in [-a_2, a_2]$ ($a_2 \ge 0$) on the path in $\{(\boldsymbol{\rho},\lambda)\}^2$. Let $\rho_{c} \in [-\min(a_1,a_2),\min(a_1,a_2)]$, we can always parametrize all the folding angles on $P^1 \cup P^2$ to a continuous path by $\rho_{c}$. If $a_1$ or $a_2=0$, the direct product of the two paths in $\{(\boldsymbol{\rho},\lambda)\}^1$ and $\{(\boldsymbol{\rho},\lambda)\}^2$ is a new path in the intersection of $\{(\boldsymbol{\rho},\lambda)\}^1$ and $\{(\boldsymbol{\rho},\lambda)\}^2$.
	
	Further, we can repeat what we have done for all single creased papers. Since the number of single creased papers is at most countable, we can obtain a path between $(\boldsymbol{\rho},\lambda)$ and another point $(\boldsymbol{\overline{\rho}},\lambda)$ in the intersection of configuration spaces of all single creased papers if each time we can add a single creased paper in the generic case. If $\lambda$ is continuous on this path, $(\boldsymbol{\rho},\lambda)$ is rigid-foldable to $(\boldsymbol{\overline{\rho}},\lambda)$ along this path. Otherwise, it may be possible to choose a subset of this path to avoid self-intersection of different single creased papers and make $\lambda$ continuous (so-called generic case). Besides, for $(\boldsymbol{0},\emptyset)$, by choosing $\boldsymbol{\rho}'$ on this path sufficiently close to $\boldsymbol{\rho}$, we can avoid self-intersection of different single creased papers and ensure $\lambda$ has no definition, so $(\boldsymbol{0},\emptyset)$ is rigid-foldable to $(\boldsymbol{\rho}',\emptyset)$ and $(-\boldsymbol{\rho}',\emptyset)$.  
\end{proof}

\begin{figure}[!tb]
	\noindent \begin{centering}
		\includegraphics[width=1\linewidth]{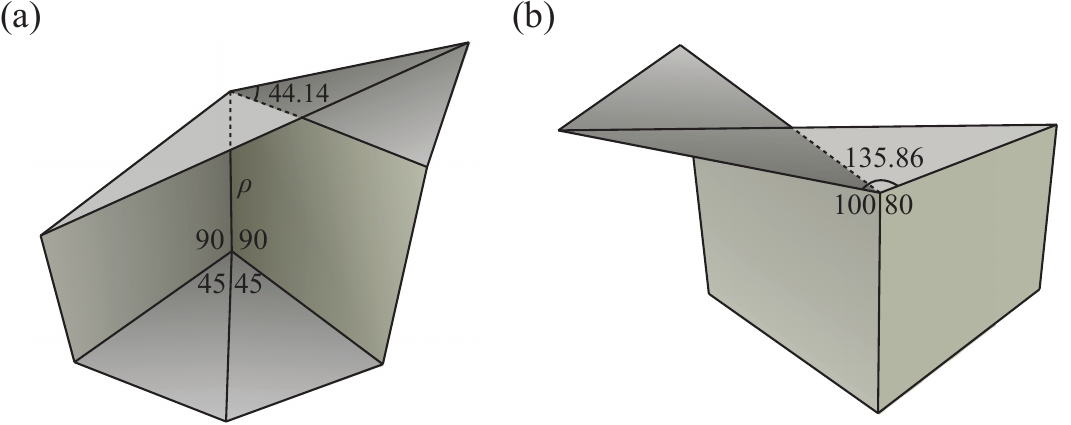}
		\par\end{centering}
	
	\caption{\label{fig: special pattern}This figure shows a non-generic case mentioned in the proof of Theorem \ref{no cycle}. (a) and (b) are the front and rear perspective views of a creased paper, where two single-vertex creased papers share two panels. The sector angles are shown in degrees. This rigidly folded state is not rigid-foldable because the common folding angle $\rho$ cannot exceed $\pi/2$ in the top single-vertex creased paper, and cannot be below $\pi/2$ in the bottom single-vertex creased paper, although both of the single-vertex creased papers are rigid-foldable.}
\end{figure} 

\begin{rem}
	Theorem \ref{no cycle} is not necessary, even when the interior of $C$ is a forest. If for some single creased papers, the restriction of $(\boldsymbol{\rho},\lambda)$ is an isolated point, it cannot have relative rigid folding motions, but $(P,C)$ can still be rigid-foldable. If there is a cycle in the interior of $C$, the path in the intersection of the configuration spaces of all single creased papers may not be successfully generated with the above one-by-one process, and generically $(\boldsymbol{\rho},\lambda)$ is not rigid-foldable. 
\end{rem}

Although the general rigid-foldability problem is hard, with Theorem \ref{no cycle}, we can analyze some simple creased papers, e.g. quadrilateral creased papers, which is discussed in a following article. 

\section{Numeric analysis of the Configuration Space}

This section gives a brief overview of the numeric analysis on rigid origami. Although this paper concentrates on the algebraic and geometric methods, numerical analysis can be an efficient tool for tracking the rigid folding motion and avoiding self-intersection. We also briefly mention the analysis of the complexity of certain problems in rigid origami to give a more complete perspective.

\subsection{Calculating the Rigid Folding Motion}

Given a creased paper $(P,C)$, we know there are at least two rigidly folded states $(\boldsymbol{t}_0, \lambda(\boldsymbol{t}_0))$ and $(-\boldsymbol{t}_0, -\lambda(\boldsymbol{t}_0))$. A possible way to know its configurations space is to solve the equation $\boldsymbol{A}(\boldsymbol{t})=\boldsymbol{0}$ introduced in Section \ref{section: algebraic} numerically, and remove the solutions in $N_{P,C}$. The solution of this system of polynomial equations is an initial value problem, where various numeric methods can be used. One example is in \cite{tachi_freeform_2010-1}.

\subsection{Self-intersection of Panels} \label{subsection: self-intersection}

In rigid origami, the study on boundary constraints (self-intersection of panels) requires different technique comparing to the study of consistency constraints. A straightforward question is, how to design a rigid folding motion without self-intersection if a given creased paper is rigid-foldable. The prime source of relevent results comes from the study of unfolding a polyhedron to a planar polygon without overlapping (also called the \textit{net}), which has been extensively studied \cite{orourke_unfolding_2008-1}. However, the result of how to design a rigid folding motion without the self-intersection when unfolding a polyhedron is still developing, as introduced below. 

It is natural to consider applying a collision detection algorithm to solve the problem of avoiding self-intersection. \cite{bohlin_path_2000} proposed an algorithm called \textit{Lazy PRM}, which minimizes the number of collision checks performed during robot motion planning and hence minimizes the running time of the planner. Then \cite{xi_continuous_2015} provided an algorithm under the Lazy PRM framework for cutting and unfolding a polyhedron continuously to one of its net without self-intersection. This algorithm also works for a ``tessellated polyhedron'', where each face of a polyhedron is triangulated densely. However, it is still a challenge to unfold a complicated non-convex polyhedron in general if the net is not \textit{linearly foldable} (``linearly foldable'' means there exists a straight-line linearly interpolating the planar state to
the fully folded state without self-intersection). Further, \cite{hao_synthesis_2018} showed that for a given polyhedron, we can optimize the way of cutting to generate an optimized net that is linearly foldable, \textit{uniformly foldable} (``uniformly foldable'' means the speed of each folding angle are the same), and much faster in motion planning than an arbitrary net. Nevertheless, for some complicated non-convex polyhedra such net may not exist.

There are also some other algorithms for some special creased papers. \cite{aloupis_edge-unfolding_2008} showed that we can unfold a ``nested band'' continuously to place all faces of the band into a plane without intersection by cutting along exactly one edge. However, the technique used here is extended from one dimensional folding, which seems hard to be applied universally. \cite{demaine_continuous_2011-1} proved that the source unfolding \cite{orourke_unfolding_2008-1} of any convex polyhedron can be continuously unfolded without self-intersection. Further, any convex polyhedron can be continuously unfolded without self-intersection after a linear number of cuts. Although here the rigid folding motion can be constructed in polynomial time, the source unfolding itself is difficult to compute.

\subsection{Complexity}

Computer-aided design is common for origami, and there have been many results on the complexity of algorithm in origami problems. Here we are more interested in analyses related to rigid origami. \cite{bern_complexity_1996} showed that it is NP-hard to determine whether a given creased paper can be fold flat (using all the inner creases). \cite{demaine_geometric_2007} showed that it only takes linear time to determine whether a single-vertex creased paper can be fold flat (using all the inner creases). A recent result \cite{akitaya_rigid_2018} showed that it is weakly NP-hard to determine whether a degree-4 creased paper (all the inner vertices are degree-4) can be folded flat (using all the inner creases), and it is strongly NP-hard to determine whether a given creased paper is rigid-foldable (using optimal inner creases). The analysis of complexity gives insight for the case where we want to approximate a target surface by sub-dividing the crease pattern.  

\section{Discussion}

\subsection{Flat-foldability of Rigid Origami}

The definition of rigid origami in this paper allows the discussion on flat-foldability. A creased paper is \textit{flat-foldable} if and only if it has a different flat rigidly folded state, where all the folding angles are $\pm \pi$. Note that flat-foldability does not require a rigid folding motion. There have been many conclusions, some of which are collected in \cite{demaine_geometric_2007}, including the flat-foldability of a strip (1-dimensional origami), a single-vertex creased paper, and the map folding. \cite{tachi_generalization_2009} provides the sufficient and necessary condition for the flat-foldability of a large quadrilateral creased paper.

\subsection{Mountain-valley Assignment}

\begin{defn}
	A \textit{mountain-valley assignment} of a creased paper is a discrete map of every inner crease $\mu\text{:}\;\{c_j\}\rightarrow\{M,V\}$. If the folding angle of an inner crease is positive, we call it a \textit{mountain crease} ($M$), while if it is negative, we call it a \textit{valley crease} ($V$).
\end{defn}

This concept is widely used. However, counting the mountain-valley assignment is known to be a difficult problem. The mountain-valley assignment is of interest here because for a developable creased paper different mountain-valley assignment can classify different branches of rigid folding motion. For a flat-foldable single-vertex creased paper, current progress is given in \cite{hull_flat_2011}. For a rigid-foldable single-vertex creased paper \cite{abel_rigid_2016} or Miura-ori \cite{ballinger_minimum_2015}, the idea of \textit{minimal forcing set} helps to analyze the mountain-valley assignment. Given a rigid-foldable creased paper and a mountain-valley assignment $\mu$, The \textit{forcing set} is a subset of inner creases such that the only possible mountain-valley assignment for this creased paper that agrees with $\mu$ on the forcing set is $\mu$ itself. If a forcing set has minimal size among all the forcing sets, it is called the \textit{minimal forcing set}. The theory for the mountain-valley assignment of a large creased paper is still developing, although given a specific example we have some techniques (at least, enumeration) to deal with it. An approach is linking this problem to graph coloring, \cite{ginepro_counting_2014} counts the number of mountain-valley assignments for local flat-foldability of a Miura-ori, while how to identify those guarantee global flat-foldability (which are the real number of branches) requires some unknown techniques. For a non-developable creased paper, the mountain-valley assignment cannot be used to classify its rigid folding motions. Generically, a rigid-foldable creased paper with more symmetry will have more branches of rigid folding motion.

\subsection{Monotonous Rigid-foldability} 

If for two rigidly folded states $(\boldsymbol{\rho}_1, \lambda_1)$ and $(\boldsymbol{\rho}_2, \lambda_2)$ of a creased paper $(P,C)$, there exists a monotonous path linking corresponding $\boldsymbol{\rho}_1$ and $\boldsymbol{\rho}_2$ in $\{\boldsymbol{\rho}\}_{P,C}$, where each component is (strictly) monotonous or remains a constant, and $\lambda$ is continuous on this path, we say $(\boldsymbol{\rho}_1, \lambda_1)$ is (\textit{strictly}) \textit{monotonously rigid-foldable} to $(\boldsymbol{\rho}_2, \lambda_2)$. 

This concept is an extension of the \textit{expansive rigid folding motion} mentioned in \cite{streinu_single-vertex_2004}. Some rigid folding motions are (strictly) monotonous, such as the folding motion of the Miura-ori. However, there are examples of rigid folding motions that are not monotonous -- an example being the rigid folding motion between some rigidly folded states of a degree-5 single-vertex creased paper. Monotonous rigid-foldability is a stronger property than 0-connectedness in the configuration space. A better understanding of monotonous rigid-foldability might prove to be useful, for instance in selecting an expansive rigid folding motion that can avoid local self-intersection.

\subsection{Variation for a Given Crease Pattern}
If we fix the interior of a crease pattern, the shape of panels and the outer creases are not related to the consistency constraints of a rigidly folded state, thus we can reshape the panels and the outer creases to obtain a different creased paper, which will not essentially change the configuration space.

\subsection{Kirigami}
Kirigami can be defined as cutting along at most countable continuous curves on a creased paper. If a cut curve is closed, a region whose boundary is this cut curve will be removed, otherwise a cut curve will be split into two boundary components. How kirigami will affect the rigid-foldability has not been fully studied, although clearly kirigami will not decrease the rigid-foldability of a creased paper. We intend to discuss this in a future article.

\section{Conclusion}
This article puts forward a theoretical framework for rigid origami, which is used to review some important previous results and to draw some new conclusions. After clarifying the related definitions, the key problem is to describe the configuration space of a creased paper, and we provide some useful results from other related areas, including rigidity theory, graph theory, linkage folding, and computer science. Although some progress has been made, the complete theory is still unclear, which will lead to future work.

\section{Acknowledgment}

We thank Allan McRobie, Tomohiro Tachi, Robert Connelly, Thomas C. Hull, Walter Whiteley and Hellmuth Stachel for helpful discussions.

%%%%%%%%%% Insert bibliography here %%%%%%%%%%%%%%

\section*{Appendix}

\appendix

\subsection{An Alternative Definition of Rigid Origami from Origami} \label{app: alternative}

Here we present the result in \cite{demaine_non_2011}: an isometric map on a creased paper will become piecewise rigid if we require the paper $S$, crease pattern $G$ and isometry function $f$ to have stronger properties. Following Definitions \ref{defn: generalized}--\ref{defn: folded state}, we add the conditions below,
\begin{enumerate} [(1)]
	\item $S$ is piecewise-$C^2$.
	\item If a point on a crease $c$ is $C^0$ or $c \subset \partial S$, $c$ is a line segment.
	\item A point on a crease or piece of $(S,G)$ is locally isometric to a disk or a half-disk. A vertex of $(S,G)$ is not necessarily locally isometric to a disk or a half-disk.
	\item $f$ is an isometry function such that $f(S)$ is piecewise-$C^2$.
\end{enumerate}
Then if all ``folding angles'' are non-trivial,
\begin{enumerate} [(1)]
	\item Each piece is planar.
	\item The restriction of $f$ on each piece is a combination of translation and rotation (reflection is not necessary).
\end{enumerate}

\subsection{The Configuration Space of a Degree-1, 2 and 3 Single Creased Paper} \label{app: 1,2,3}

Here We consider the folding angle space of a degree-$n$ single-vertex and single-hole creased paper, called $\{\boldsymbol{\rho}\}_v^n$ and $\{\boldsymbol{\rho}\}_h^n$, from the solution space $W_v^n$ and $W_h^n$. When $n \le 3$, the order function has no definition.

\begin{enumerate} [(1)]
	\item $W_v^1$: $\rho_1=0$, when $\alpha_1=2\pi$. $\{\boldsymbol{\rho}\}_v^1=W_v^1$, which is the same for $W_h^1$ and $\{\boldsymbol{\rho}\}_h^1$ with $\beta_1=2\pi$ and $a_1=b_1=0$. 
	
	The folding angle on an inner crease incident to a degree-1 vertex is always $0$, which means we can regard this single creased paper as a panel. On the other hand, if a single-hole creased paper has only one inner crease, this single creased paper as well as the hole should always keep planar, which means we can merge them to a panel. Therefore in a large creased paper we only need to consider at least degree-2 single creased papers.
	
	\item $W_h^2$ is either (for $W_v^2$, set $\alpha_1=\beta_1$, $\alpha_2=\beta_2$, $a_1=b_1=a_2=b_2=0$)
	\begin{enumerate}
		\item $\rho_1=\rho_2$, when $\beta_1 = \beta_2=\pi$, $a_1=a_2$, $b_1=b_2=0$.
		\item $\rho_1=\rho_2=0$, when $\beta_1 + \beta_2=2\pi$, and
		\begin{equation} \label{degree-2 single hole 1}
		\begin{split}
		a_1+a_2 \cos \beta_1-b_2 \sin \beta_1=0 \\
		b_1+a_2 \sin \beta_1+b_2 \cos \beta_1=0
		\end{split}
		\end{equation}		
		\item $\rho_1= \pm \pi$, $\rho_2=\pm \pi$, when $\beta_1 = \beta_2$, and
		\begin{equation} \label{degree-2 single hole 2}
		\begin{split}
		a_1+a_2 \cos \beta_1+b_2 \sin \beta_1=0 \\
		b_1+a_2 \sin \beta_1-b_2 \cos \beta_1=0
		\end{split}
		\end{equation}
	\end{enumerate}
	$\{\boldsymbol{\rho}\}_h^2$ is either (for $\{\boldsymbol{\rho}\}_v^2$, set $\alpha_1=\beta_1$, $\alpha_2=\beta_2$, $a_1=b_1=a_2=b_2=0$)
	\begin{enumerate}
		\item $\rho_1=\rho_2$, when $\beta_1 = \beta_2=\pi$, $a_1=a_2$, $b_1=b_2=0$. 
		\item $\rho_1 = \rho_2 = 0$, when $\beta_1 + \beta_2= 2\pi$, and equation (\ref{degree-2 single hole 1}) is satisfied.
		\item $\rho_1 = \rho_2 = \pm \pi$, when $\beta_1=\beta_2$, and equation (\ref{degree-2 single hole 2}) is satisfied.
	\end{enumerate}
	
	Considering $\{\boldsymbol{\rho}\}_v^2$, for a degree-2 single-vertex creased paper in case (a), we can merge the vertex and two inner creases into one inner crease; for case (b) or (c), we can regard this single creased paper as a panel. For a degree-2 single-hole creased paper, case (a) can be regarded as two panels rotating along an inner crease. For case (b) or (c), the configuration space is trivial, which means we can regard them as a panel. Therefore in a large creased paper we only need to consider at least degree-3 single creased papers. 
\end{enumerate}

For $n \ge 3$, it seems hard to make direct symbolic calculations and study the real roots. A possible way is to find some properties of the solution space from analyzing the Reduced Gr\"obner Basis, but the complexity increases rapidly. We then provide the result for $\{\boldsymbol{\rho}\}_{v}^{3}$ from the analysis on a spherical triangle, which is:

\begin{enumerate} [(a)]
	\item $i, j, k$ is a permutation of $\{1,2,3\}$. If 
	\begin{enumerate}
		\item $\alpha_i=\pi$, then $\alpha_j+\alpha_k=\pi$, $\rho_k=\rho_i$, $\rho_j=0$.
		\item $\alpha_i+\alpha_j=\pi$, then $\alpha_k=\pi$, $\rho_k=\rho_j$, $\rho_i=0$.
	\end{enumerate}
	\item If $\alpha_1+\alpha_2+\alpha_3=2\pi$ and not (a), $\rho_1=\rho_2=\rho_3=0$. 
	\item Otherwise, there are two solutions $\{\rho_1, \rho_2, \rho_3\}$ and $\{-\rho_1, -\rho_2, -\rho_3\}$, which satisfies the following equations and the supplement of $\rho_1, \rho_2, \rho_3$ are the interior angles of a spherical triangle. (Special cases like $\alpha_i=\alpha_j+\alpha_k$ are included.)   
	\begin{equation}
	\begin{split}
	\cos \rho_1=\dfrac{\cos \alpha_1 \cos \alpha_2-\cos \alpha_3}{\sin \alpha_1 \sin \alpha_2} \\
	\cos \rho_2=\dfrac{\cos \alpha_2 \cos \alpha_3-\cos \alpha_1}{\sin \alpha_2 \sin \alpha_3} \\
	\cos \rho_3=\dfrac{\cos \alpha_3 \cos \alpha_1-\cos \alpha_2}{\sin \alpha_3 \sin \alpha_1}
	\end{split}
	\end{equation}
\end{enumerate}

As for a degree-3 single-hole creased paper, $\{\boldsymbol{\rho}\}_{h}^{3}$ is a subset of $\{\boldsymbol{\rho}\}_{v}^{3}$ and is not empty. Since it is not possible for a degree-3 single creased paper to have continous rigid folding motion different from folding along a single crease, we usually consider no less than degree-4 single creased papers.

\vskip2pt

\bibliographystyle{RS}

\bibliography{Rigid-Folding}

\end{document}